\newtheorem{thm}{Theorem}[section]
\newtheorem{prop}[thm]{Proposition}
\newtheorem{lemma}[thm]{Lemma}
\newtheorem{cor}[thm]{Corollary}
\newtheorem{defn}[thm]{Definition}
\newtheorem{remark}[thm]{Remark}
\newtheorem{qus}[thm]{Question}%[section]
 \numberwithin{equation}{section}
\newcommand{\dbb}[1]{[\![#1]\!]}
 \newcommand{\Fln}{\mathbb{F}\ell_{1, n-1; n}}
\newcommand{\bbC}{\mathbb C}
\newcommand{\bbP}{\mathbb P}
\newcommand{\bbQ}{\mathbb Q}
\newcommand{\bbZ}{\mathbb Z}
\newcommand{\ch}{\mathrm{ch}}
\newcommand{\qch}{\mathrm{qch}}
\newcommand{\Td}{\mathrm{Td}}
\begin{document}

\title{Towards small quantum Chern character}
%\author{Huazhong Ke, Changzheng Li, Jiayu Song}
%\date{\today}

\author{Hua-Zhong Ke}
%    Address of record for the research reported here
\address{School of Mathematics, Sun Yat-sen University, Guangzhou 510275, P.R. China}
%    Current address
%\curraddr{Department of Mathematics }
\email{kehuazh@mail.sysu.edu.cn}
%    \thanks will become a 1st page footnote.
\thanks{ %The first author  is supported in part by %a RGC research grant from the Hong Kong Government.
%The second author  is  supported in part %by KRF-2007-341-C00006.
 }
 
\author{Changzheng Li}
 \address{School of Mathematics, Sun Yat-sen University, Guangzhou 510275, P.R. China}
\email{lichangzh@mail.sysu.edu.cn}

\author{Jiayu Song}
 \address{School of Mathematics, Sun Yat-sen University, Guangzhou 510275, P.R. China}
\email{songjy29@mail2.sysu.edu.cn}

%    \thanks will become a 1st page footnote.

%    \thanks will become a 1st page footnote.
\thanks{}

\maketitle

\begin{abstract}
  We  show a quantum {version of} Chern character homomorphism  from the small quantum $K$-theory to the small quantum cohomology in the cases of projective spaces and  incidence varieties, whose classical limit gives the classical Chern character homomorphism. We also   provide a ring presentation of the small quantum $K$-theory of Milnor hypersurfaces.
\end{abstract}

\section{Introduction}
Let $X$ be a smooth complex projective variety. Let $K(X)=K^0(X;\mathbb{Q})$  be the Grothendieck  ring of  topological complex vector bundles on $X$ with rational coefficients. Consider the even part $H^{\rm ev}(X)$ of the rational cohomology $H^*(X)=H^*(X; \mathbb{Q})$. It is well-known that there is the Chern character ring homormorphism \cite{Gro, AtHi}
 $${\rm ch}:  K(X) \longrightarrow H^{\rm ev}(X).$$ 
There are quantum deformations of both sides. It is    natural to ask the following.

 \begin{qus}
 Is there a quantum Chern character  homomorhism   in the quantum world?
 \end{qus}

More precisely, the deformation $QH_{\rm big}(X)=(H^{\rm ev}(X)\otimes \bbQ\dbb{\mathbf{q}, \mathbf{s}}, *_\mathbf{s})$ of $(H^{\rm ev}(X), \cup)$, called the (even) big   quantum cohomology ring, encodes 
genus-zero  Gromov-Witten invariants of $X$ \cite{RuTi, BeFa, LiTi}.
Its $K$-analogue is the big quantum $K$-theory  $QK_{\rm big}(X)=(K(X)\otimes \bbQ\dbb{\mathbf{Q}, \mathbf{t}}, \star_\mathbf{t})$, encoding   
genus-zero  $K$-theoretic Gromov-Witten invariants \cite{Giv, Lee}.
Their restrictions to the fiber at the origin give the  small quantum cohomology/$K$-theory
 $$QH(X)=(H^{\rm ev}(X)\otimes\bbQ\dbb{\mathbf{q}}, *),\qquad QK(X)=(K(X)\otimes \bbQ\dbb{\mathbf{Q}}, \star)$$ respectively, which are relatively more accessible than the big ones. Whenever the small quantum product is finite, we can consider the corresponding polynomial version
  $$QH_{\rm poly}(X)=(H^{\rm ev}(X)\otimes\bbQ[\mathbf{q}], *),\qquad QK_{\rm poly}(X)=(K(X)\otimes \bbQ[\mathbf{Q}], \star)$$
 For instance, $QH_{\rm poly}(X)$ always makes sense for any Fano manifold $X$, while $QK_{\rm poly}(X)$ are known to make sense for very few examples. 
{There is a so-called {fake} quantum $K$-theory, which may be seen as building block of the big quantum $K$-theory. There has been the (big) quantum Chern(-Dold) character  \cite{Coa, Giv04, CoGi},  defined for the fake quantum $K$-theory. On the one hand, it gives  a  comparison  of the genus-0 fake $K$-theoretic  Gromov-Witten invariants with the cohomological ones in terms of symplectic geometry of loop spaces; moreover, by \cite[Remark 5.13]{IMT}, the big quantum $K$-theory is isomorphic to the big quantum cohomology as $F$-maniolds through the Hirzebruch-Riemann-Roch theorem for fake quantum $K$-theory \cite{GiTo}.  On the other hand, in general such isomorphism does not preserve the small loci, and it looks unclear how the (big) quantum Chern character is viewed as a quantum version of  the classical Chern character above.}
 
 The main aim of this paper is to explore a (small) quantum Chern character  ring homomorphism ${\rm qch}$ for the small quantum $K$-theory, such that  {its classical limit gives $\ch$,} i.e. the following diagram of ring homomorphisms is commutative, where vertical maps  are    natural projections.  
   \begin{equation} \label{commdiag}\xymatrix{
   QK(X) \ar@{->>}[d]_{\mod \mathbf{Q}}  \ar[r]^{\rm qch}
                & QH^*(X) \ar@{->>}[d]^{{}\!\!\!\!\mod \mathbf{q}}   \\
 QK(X)/(\mathbf{Q})= K(X)  \ar[r]^{\!\!\rm ch}
                & H^*(X)=QH^*(X)/(\mathbf{q})            }
\end{equation}
 As we will see below, we succeed to find the small quantum Chern character morphism $\mbox{qch}$  in the cases of projective spaces $\bbP^n$ and incidence varieties $\Fln$. 
 
 Both $\bbP^n$ and   $\Fln$  are examples of flag varieties $G/P$, which are Fano manifolds and are  central objects of study in enumerative geometry. There have been extensive studies of  $QH_{\rm poly}(G/P)$ (see e.g. the survey \cite{LeLi} and the references therein).
 There have also been studies of $QK(G/P)$ in individual cases \cite{BuMi, BCMP18,  BCP, LNS, KLNS, Xu, LLSY} from the lens of  Schubert calculus, giving the quantum multiplication of certain Schubert classes.
 Ring presentations have recently been provided for the small quantum $K$-theory of type $A$ partial flag varieities   \cite{KPSZ, GMSZ,  GM+23, GM+24, HK, MNS25a, MNS25b, AHK+25} and of type $C$ complete flag varieties \cite{KoNa}, as well as  for  general $G/P$ in terms of  $K$-theoretic version of Peterson’s ‘quantum = affine’ statement \cite{LLMS, IIM} proved   in the groundbreaking works \cite{Kat18,Kat19} (see also \cite{ChLe}).
 Moreover, the small quantum $K$ product is in fact finite for general $G/P$ \cite{ACT} (see also \cite{BCMP13, BCMP16} for certain Grassmannians), so that   $QK_{\rm poly}(G/P)$ is meaningful.

Let us start with the simplest case $\bbP^n$. The line bundle class $L:=[\mathcal{O}_{\bbP^n}(1)]$ generates $K(\mathbb{P}^{n})$ with  inverse $L^{-1}=[\mathcal{O}_{\bbP^n}(-1)]$, and the hyperplane class $h:=c_1(\mathcal{O}_{\bbP^n}(1))$ (or simply $c_1(L)$) generates $H^{*}(\mathbb{P}^{n})=H^{\rm ev}(\bbP^n)$.  We have $$QK_{\rm poly}(\mathbb{P}^{n})=\mathbb{Q}[L^{-1}, Q]/((1-L^{-1})^{n+1}-Q),\qquad  {QH}_{\rm poly}(\mathbb{P}^{n})=\mathbb{Q}[h, q]/(h^{n+1}-q),$$ 
both of which are isomorphic to $\bbQ[x]$ as $\bbQ$-algebras. However, it is less obvious whether there is an (iso)morphism compatible with  $\mbox{ch}$ in the sense of making diagram \eqref{commdiag} commutative.
To achieve our aim, we  need    the completion ${QH}(\bbP^n)=QH_{\rm poly}(\bbP^n)\otimes_{\bbQ[q]}\bbQ\dbb{q}$ and introduce the \textit{quantum Todd class}  of the tangent bundle $T_{\mathbb{P}^{n}}$,
 \begin{align}
  {\rm Td}_{q}(T_{\mathbb{P}^{n}}):=(\frac{h}{1-e^{-h}})^{n+1}\in {QH}(\bbP^n),
 \end{align}
in whose series expansion each term $h^k$ is read off as the quantum product $h^{*k}$. In particular, we have ${\rm Td}_q(T_{\bbP^n})\equiv {\rm Td}(T_{\bbP^n}) \mod \mathbf{q}$.
  As the warm-up theorem, we have 
 \begin{thm}[\textbf{\cref{thm: Pn}}]\label{mainthmPn}
The  map {\upshape$ \mbox{qch}: QK(\mathbb{P}^{n})\longrightarrow  {QH}(\mathbb{P}^{n})$},  given by 
$$ L^{-1} \mapsto e^{-h}=\sum_{m=0}^{+\infty}\frac{(-h)^{*m}}{m!},\qquad Q \mapsto q~({\rm Td}_{q}(T_{\mathbb{P}^{n}}))^{-1}, $$
is a well-defined ring homomorphism, and its classical limit gives {\upshape $\mbox{ch}$}.
\end{thm}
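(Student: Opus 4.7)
The plan is to invoke the universal property of the quotient ring: after checking that the prescribed images are well-defined elements of $QH(\mathbb{P}^n) \cong \mathbb{Q}\dbb{q}[h]/(h^{n+1}-q)$, I would define $\qch$ first on the polynomial subring $\mathbb{Q}[L^{-1}, Q]$, verify that the defining relation $(1-L^{-1})^{n+1}-Q$ goes to zero, and then extend continuously to the $Q$-adic completion $QK(\mathbb{P}^n)$. Two preliminaries handle well-definedness and units: any formal series $\sum_k c_k h^k \in \mathbb{Q}\dbb{h}$ converges under the substitution $h^k \mapsto h^{*k}$, because in $QH(\mathbb{P}^n)$ one has $h^{*k} = q^{\lfloor k/(n+1)\rfloor} h^{k \bmod (n+1)}$, so the $q$-adic valuation tends to infinity; and $\mathrm{Td}_q(T_{\mathbb{P}^n}) \bmod q$ equals the classical Todd class ${\rm Td}(T_{\mathbb{P}^n})$, whose constant term in $h$ is $1$, so $\mathrm{Td}_q(T_{\mathbb{P}^n})$ is a unit in the free rank-$(n+1)$ $\mathbb{Q}\dbb{q}$-module $QH(\mathbb{P}^n)$.

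The central step is the identity $(1-e^{-h})^{*(n+1)} = q \cdot (\mathrm{Td}_q(T_{\mathbb{P}^n}))^{-1}$ in $QH(\mathbb{P}^n)$, which is exactly what makes the image of the defining relation vanish. I would establish it by observing that the substitution $\phi: \mathbb{Q}\dbb{h} \to QH(\mathbb{P}^n)$, $\sum c_k h^k \mapsto \sum c_k h^{*k}$, is a continuous $\mathbb{Q}$-algebra homomorphism: it respects multiplication because the quantum product is commutative, associative, and bilinear (so $h^{*k} * h^{*l} = h^{*(k+l)}$), and it is continuous since the images of $h^k$ tend to zero $q$-adically. Applying $\phi$ to the elementary formal identity $(1-e^{-h})^{n+1} \cdot (h/(1-e^{-h}))^{n+1} = h^{n+1}$ in $\mathbb{Q}\dbb{h}$ yields $(1-e^{-h})^{*(n+1)} * \mathrm{Td}_q(T_{\mathbb{P}^n}) = h^{*(n+1)} = q$, and dividing by $\mathrm{Td}_q(T_{\mathbb{P}^n})$ gives the required identity. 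Thus the quantum Todd class is precisely the correction factor needed so that the classical relation $(1-L^{-1})^{n+1} = 0$ in $K(\mathbb{P}^n)$ survives quantum deformation.

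Extension to the completion then follows because $\qch(Q)$ has $q$-adic valuation at least $1$, so any power series in $Q$ with coefficients in $K(\mathbb{P}^n)$ is sent to a $q$-adically convergent sum in $QH(\mathbb{P}^n)$. For the classical limit, reducing modulo $q$ kills $\qch(Q)$ and truncates $e^{-h}$ to its image in $H^*(\mathbb{P}^n) = \mathbb{Q}[h]/(h^{n+1})$, which is exactly $\ch(L^{-1})$; hence diagram~\eqref{commdiag} commutes. The main obstacle is the central identity of the second paragraph, and the technical heart of the argument is recognizing the substitution $\phi$ as a continuous ring homomorphism — once that is in hand, the classical Todd identity does the rest.
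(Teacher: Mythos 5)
Your proposal is essentially the paper's own argument: both reduce to showing the single relation $(1-e^{-h})^{n+1}=q\,(\mathrm{Td}_q(T_{\mathbb{P}^n}))^{-1}$ in $QH(\mathbb{P}^n)$, and both derive it by pushing the formal identity $1-e^{-x}=x\cdot\frac{1-e^{-x}}{x}$ through the substitution $h^k\mapsto h^{*k}$ (a ring homomorphism by commutativity/associativity of $*$) and then invoking $h^{n+1}=q$. You make the well-definedness and continuity checks somewhat more explicit, and you phrase the final step as dividing by the unit $\mathrm{Td}_q$ rather than, as the paper does, writing the product $h^{n+1}*(\frac{1-e^{-h}}{h})^{n+1}$ directly, but these are the same proof.
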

\begin{remark}
 As pointed out in \cite[Remark 5.13]{IMT},  Givental-Tonita's work \cite{GiTo} implies that the big quantum $K$-theory is isomorphic to the big quantum cohomology as $F$-maniolds. From the discussion with Hiroshi Iritani,   such isomorphism does not restrict to the small locus in the following sense: when $n\ge2$,  there is no  continuous ring homomorphism ${\rm ch}^{q}: QK(\mathbb{P}^{n}) \rightarrow QH(\mathbb{P}^{n})$ (with respect to the natural $Q$-adic/$q$-adic topology) such that   ${\rm ch}^{q}(Q)\in \bbQ\dbb{q}\setminus\{0\}$ and \eqref{commdiag} is a commutative diagram of ring homomorphisms.  In \cref{mainthmPn}, the image of $Q$ does lie in  $QH(\bbP^n)\setminus \bbQ\dbb{q}$.  
\end{remark}
 The second simplest case may be the incidence varieties $\Fln=\{V_1\leq V_{n-1}\leq \bbC^n\mid \dim V_1=1, \dim V_{n-1}=n-1\}$.
Let $\underline{\bbC^n}$ denote the trivial $\bbC^n$-bundle over $\Fln$, and $\mathcal{S}_1$ 
(resp. $\mathcal{S}_{n-1}$) denote the tautological vector subbundle  whose fiber at a point $V_\bullet$ is just the vector space $V_1$ (resp. $V_{n-1}$).
Let $L_1$ (resp. $L_2$) denote the class of the dual line bundle $\mathcal{S}_1^\vee$ (resp. quotient line bundle $\underline{\bbC^n}/\mathcal{S}_{n-1}$) in $K(\Fln)$. We have the inverse  $L_1^{-1}=[\mathcal{S}_1]$ and $L_2^{-1}= [(\underline{\bbC^n}/\mathcal{S}_{n-1})^\vee]$ in $K(\Fln)$, and denote $h_i:=c_1(L_i)\in H^2(\Fln)$ for $i=1, 2$.
We use the ring presentation for $QH_{\rm poly}(\Fln)$ with generators $\{h_1, h_2\}$ in \cite[Proposition 7.2]{ChPe} (see \cref{prop: QHFl} below). For $QK(\Fln)$, the ring presentation could be obtained by simplifying  the Whitney presentation in \cite[Theorem 1.3]{GM+23} (with $2n-1$ generators) or derived from the quantum Littlewood-Richardson rule in \cite{Xu}. Here we consider the ring presentation with generators $\{L_1^{-1}, L^{-1}_2\}$ read off from \cref{thm: presentationQK} as a special case of Milnor hypersurfaces. They are of the following form 
$$QK(\Fln)={\bbQ[L_1^{-1},L_2^{-1}]\dbb{Q_1, Q_2}\over (F_1^Q, F_2^Q)}, \qquad  {QH}(\Fln)={\bbQ[h_1,h_2]\dbb{q_1, q_2}\over (f_1^q, f_2^q)}$$
with $F_i^Q=F_i^Q(L_1^{-1}, L_2^{-1}, Q_1, Q_2)$ and $f_i^q=f_i^q(h_1, h_2, q_1, q_2)$ being polynomials. Similar to $\bbP^n$, we introduce the quantum Todd class  
\begin{align}\label{ToddFl}
     ( {\rm Td}_{q}(L_i^{\oplus n}))^{-1} * {\rm Td}_{q}(L_1 \otimes L_2)   := (\frac{1-e^{-h_i}}{h_i})^{n}/(\frac{1-e^{-h_1-h_2}}{h_1+h_2}).
\end{align}  
In particular,    $( {\rm Td}_{q}(L_1^{\oplus n})) * {\rm Td}_{q}(L_1 \otimes L_2)^{-1}   \mod \mathbf{q}$ is the classical Todd class of the difference between the pullback of $T_{\mathbb{F}\ell_{1; n}}$ and the line bundle $L_1 \otimes L_2$ (see \cref{subsec: QchFl} for more details).
We have the following quantum Chern character homomorphism for $\Fln$.
 \begin{thm}[\textbf{\cref{prop--qchwelldefineFl}}]\label{mainthmFln}
The  map {\upshape$ \mbox{qch}: QK(\Fln)\longrightarrow  {QH}(\Fln)$},  given by 
 \begin{align*}
  \hspace{1em} L_{i}^{-1} &\mapsto e^{-c_1(L_i)}:=\sum_{m=0}^{+\infty}\frac{(-h_i)^{*m}}{m!},  \\
 Q_i &\mapsto q_i ( {\rm Td}_{q}(L_i^{\oplus n}))^{-1} * {\rm Td}_{q}(L_1 \otimes L_2),\qquad i=1,2.
 \end{align*}
 is a well-defined ring homomorphism, and its classical limit gives {\upshape $\mbox{ch}$}.
\end{thm}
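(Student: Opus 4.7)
The plan is to follow the blueprint used for $\bbP^n$ in \cref{mainthmPn}, now with two pairs of generators. I would first lift the prescription to a continuous $\bbQ$-algebra homomorphism $\widetilde{\mbox{qch}}: \bbQ[L_1^{-1},L_2^{-1}]\dbb{Q_1,Q_2}\to QH(\Fln)$, then show it kills the defining ideal $(F_1^Q, F_2^Q)$, and finally read off the classical limit. For the lift, the series $e^{-h_i}=\sum_{m\geq 0}(-h_i)^{*m}/m!$ must converge in $QH(\Fln)$: this follows from the fact that $QH_{\rm poly}(\Fln)$ is a finite $\bbQ[q_1,q_2]$-module (a standard property for Fano flag varieties), so after reducing $h_i^{*m}$ to a fixed basis, the minimal $q$-adic valuation of its coefficients grows with $m$. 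The prescribed image of $Q_i$ lies in the ideal $(q_1,q_2)\subset QH(\Fln)$ because $\mbox{Td}_q$ is a unit with constant term $1$, so substituting it into any power series in $Q_1,Q_2$ is legitimate.

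The heart of the proof is to show $\widetilde{\mbox{qch}}(F_i^Q)\in (f_1^q,f_2^q)$ for $i=1,2$, where $F_i^Q$ is the $\Fln$-specialization of the Milnor-hypersurface relation from \cref{thm: presentationQK} and $f_i^q$ is the relation from \cite[Proposition 7.2]{ChPe}. The guiding identities, read off from the definition \eqref{ToddFl}, are
\[ (1-e^{-h_i})^n=h_i^n\,\mbox{Td}_q(L_i^{\oplus n})^{-1},\qquad 1-e^{-h_1-h_2}=(h_1+h_2)\,\mbox{Td}_q(L_1\otimes L_2)^{-1}. \]
The image of $Q_i$, namely $q_i\,\mbox{Td}_q(L_i^{\oplus n})^{-1}*\mbox{Td}_q(L_1\otimes L_2)$, is tailored so that after substituting $L_i^{-1}\mapsto e^{-h_i}$ into the $K$-theoretic factors $(1-L_i^{-1})^n$ and $1-L_1^{-1}L_2^{-1}$ that appear in $F_i^Q$, the Todd-class prefactors combine with those produced by the image of $Q_i$ to leave a unit multiple of $f_i^q$. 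The model to keep in mind is the $\bbP^n$ factorization
\[ (1-L^{-1})^{n+1}-Q\;\longmapsto\;\Bigl(\tfrac{1-e^{-h}}{h}\Bigr)^{n+1}\bigl(h^{n+1}-q\bigr), \]
which exhibits the cohomological relation $h^{n+1}-q$ as a visible factor.

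Once Step 2 is in place, the classical limit is immediate: on $L_i^{-1}$, reducing the target by $(q_1,q_2)$ sends $e^{-h_i}=\sum_m(-h_i)^{*m}/m!$ to $\sum_m(-h_i)^m/m!=\mbox{ch}(L_i^{-1})$, while $\widetilde{\mbox{qch}}(Q_i)\in(q_1,q_2)$ ensures the two routes around \eqref{commdiag} also agree on $Q_i$. The principal obstacle is the Step 2 computation: unlike the $\bbP^n$ case, the relations $F_i^Q$ couple $L_1^{-1}$ and $L_2^{-1}$ through the tensor product $L_1\otimes L_2$, so one must rearrange $F_i^Q$ carefully before substitution in order to pull out the Todd units cleanly and expose an $f_i^q$-factor. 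A practical reduction is to verify the claim first modulo $Q_{3-i}$, where the cross-term contributions simplify and a one-variable $\bbP^n$-style factorization applies, and then upgrade to the full statement using continuity of $\widetilde{\mbox{qch}}$ in the $(Q_1,Q_2)$-adic topology together with the explicit shape of $F_i^Q$. Should this direct route become unwieldy, an alternative is a deformation argument: since both $QK_{\rm poly}(\Fln)$ and $QH_{\rm poly}(\Fln)$ are finite over their respective ground rings and the classical Chern character is a ring isomorphism on the closed fibers, it suffices to check that no formal obstruction arises at each step of the $(q_1,q_2)$-adic filtration, which can be controlled order by order from the explicit Todd-class prescription.
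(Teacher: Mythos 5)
Your overall blueprint is the right one, and for the first relation it actually works out cleanly: substituting and using $h_2^{n}=q_2(h_1+h_2)$ gives
\[
F_1^Q(\qch(L_1^{-1}),\qch(L_2^{-1}),\qch(Q_1),\qch(Q_2))=\Bigl(\tfrac{1-e^{-h_2}}{h_2}\Bigr)^{n}\bigl(h_2^{n}-q_2(h_1+h_2)\bigr)=\Bigl(\tfrac{1-e^{-h_2}}{h_2}\Bigr)^{n}f_1^q,
\]
a unit times $f_1^q$, exactly the $\bbP^n$-style factorization you had in mind, because $\qch(Q_2)$ only enters $F_1^Q$ through the combination $\qch(Q_2)\bigl(1-e^{-(h_1+h_2)}\bigr)$. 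For $F_2^Q$ this mechanism breaks down, and your proposal has a genuine gap. In $F_2^Q$ the $Q$-terms do not come with a factor of $1-L_1^{-1}L_2^{-1}$, so after substitution they carry the unit $u=\frac{h_1+h_2}{1-e^{-(h_1+h_2)}}$ while the $F_2(e^{-h_1},e^{-h_2})$-term does not. What one can show directly (as the paper does) is that $\bigl(1-e^{-(h_1+h_2)}\bigr)\ast F_2^Q(\qch(\cdot))=0$ in $QH(\Fln)$, but $1-e^{-(h_1+h_2)}$ is \emph{not} a unit (it vanishes mod $(q_1,q_2)$), so to conclude $F_2^Q(\qch(\cdot))=0$ you must know that $1-e^{-(h_1+h_2)}$, equivalently $h_1+h_2$, is not a zero-divisor in $QH(\Fln)$. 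This is exactly the content of \cref{lemma--h1+h2nonzerodivisorQHFl}, which is nontrivial: $h_1+h_2$ is nilpotent in $H^*(\Fln)$, so the classical fiber gives no help, and the paper proves regularity by embedding $QH(\Fln)$ into the Jacobi ring of the Batyrev--Ciocan-Fontanine--Kim--van Straten mirror (where $q_1$ and the image of $h_1$ are invertible, hence so is $h_1+h_2=q_1^{-1}h_1^{n}$).

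Neither of your two fallbacks supplies this missing ingredient. Working modulo $Q_{3-i}$ and then "upgrading by continuity" does not address regularity at all: the troublesome factor survives in the $Q_2$- and $Q_1$-terms of $F_2^Q$ simultaneously, the mod-$q$ statement you would start the induction from is just the classical Chern character relation which is already known, and there is no $(q_1,q_2)$-adic induction that promotes $(h_1+h_2)\ast G=0$ to $G=0$. The deformation/finiteness argument has the same defect: finiteness of $QH_{\rm poly}$ over $\bbQ[q_1,q_2]$ and the isomorphism on the closed fiber say nothing about whether a particular element is a non-zero-divisor over the whole base, which is precisely the geometric input you need. So the Todd-class bookkeeping you outline is essentially correct (and is what the paper does in \cref{lemma--hasimplifyqchQaQHFl} and the proof of \cref{prop--qchwelldefineFl}), but the heart of the matter is the separate regularity statement \cref{lemma--h1+h2nonzerodivisorQHFl}, which your proposal does not identify or prove.
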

 \noindent We notice that the inverse of (classical) Todd class  appeared in the Grothendieck-Riemann–Roch formula (see e.g. \cite[Theorem 14.5]{EiHa}).
 The inverse of Todd function also appeared in the study of BCOV theory \cite[Formula (38)]{CoLi}.

The degree-$(1, 1)$ hypersurfaces in $\bbP^{n-1}\times \bbP^{m-1}$, known as Milnor hypersurfaces, 
 are an important class of algebraic varieties  in topology and algebraic geometry.
Amongest them the smooth one is usually denoted as $H_{n-1, m-1}$. In particular, we have $H_{n-1, n-1}=\Fln$. We can view $H_{n-1, m-1}$ as a smooth Schubert variety of $\Fln$ via a natural embedding $\iota$ where $n\geq m\geq 2$ (see \cref{sub: Milnor} for more details). Then we have the line bundle classes $[\iota^*(\mathcal{S}_1^\vee)]$ and $[\iota^{*}(\underline{\bbC^n}/\mathcal{S}_{n-1})]$ in $K(H_{n-1, m-1})$, denoted again as $L_1$ and $L_2$ respectively by abuse of notation. 
By \cref{Classical K}, we have $K(H_{n-1, m-1})=\mathbb{Q}[L_1^{-1}, L_2^{-1}]/(F_1(L_1^{-1}, L_2^{-1}), F_2(L_1^{-1},L_2^{-1}))$,
  where $$F_1(x,y):=(1-y)^{m},\qquad F_2(x,y):=(-1)^{n-1}(1-x)^{n-1}+\sum_{t=1}^{m-1}(-1)^{n-1-t}x^{t-1}(1-x)^{n-t-1}(1-y)^{t}.$$ 
As we will show in \textbf{\cref{quantum K ring presentation}}, we have the following ring presentation. 
\begin{thm} \label{thm: presentationQK}
    Let $n\geq m\geq 3$. The small quantum $K$-theory  of    $H_{n-1, m-1}$  is presented by
    \begin{align*}
     QK(H_{n-1, m-1}) \cong \mathbb{Q}[L_1^{-1},L_2^{-1}]\dbb{Q_1,Q_2}/(F_1^{Q}, F_2^{Q}),
\end{align*}
 where  $F_1^{Q}(L_1^{-1},L_2^{-1},Q_1,Q_2)=F_1(L_1^{-1},L_2^{-1})-Q_2+Q_2 L_1^{-1}L_2^{-1}$ and
 $$F_2^{Q}(L_1^{-1},L_2^{-1},Q_1,Q_2) =F_2(L_1^{-1},L_2^{-1})-(-1)^{n-m}Q_2 (L_1^{-1})^{m-1}(1-L_1^{-1})^{n-m}-(-1)^{n-1}Q_1 L_2^{-1}.
 $$  
\end{thm}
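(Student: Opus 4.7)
The plan follows a Siebert--Tian style strategy adapted to quantum $K$-theory. The starting point is the classical presentation $K(H_{n-1,m-1})=\bbQ[L_1^{-1},L_2^{-1}]/(F_1,F_2)$ recorded in \cref{Classical K}. I would first build the evaluation ring homomorphism
$$\Phi:\bbQ[L_1^{-1},L_2^{-1}]\dbb{Q_1,Q_2}\longrightarrow QK(H_{n-1,m-1})$$
sending each $L_i^{-1}$ to the class of the respective line bundle in $QK(H_{n-1,m-1})$, viewed through the canonical module identification $QK\cong K\otimes\bbQ\dbb{Q_1,Q_2}$. Because $\Phi\bmod(Q_1,Q_2)$ recovers the classical surjection from the polynomial ring onto $K(H_{n-1,m-1})$, Nakayama's lemma guarantees that $\Phi$ is surjective, so $L_1^{-1},L_2^{-1}$ generate $QK(H_{n-1,m-1})$ as a $\bbQ\dbb{Q_1,Q_2}$-algebra.

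The main step is to verify that $\Phi(F_1^Q)=0$ and $\Phi(F_2^Q)=0$. The key geometric input is that the second projection $H_{n-1,m-1}\to\bbP^{m-1}$ realises $H_{n-1,m-1}$ as the projectivization $\bbP(\mathcal{F})$ of the rank-$(n-1)$ kernel bundle $\mathcal{F}=\ker(\mathcal{O}_{\bbP^{m-1}}^{\oplus n}\to\mathcal{O}_{\bbP^{m-1}}(1))$, with $L_2$ pulled back from the base and $L_1$ restricting to the relative hyperplane class. The two edges of the Mori cone of $H_{n-1,m-1}$ are then a fibre line (coupled to $Q_1$) and a minimal lift of the base hyperplane (coupled to $Q_2$), so the quantum corrections in the $F_i^Q$ come from $K$-theoretic Gromov--Witten invariants of low bidegree. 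I would extract them by combining (a) a $K$-theoretic projective bundle formula, which should produce the correction $-Q_2(1-L_1^{-1}L_2^{-1})$ in $F_1^Q$ as a quantum deformation of the fibrewise tautological relation, and (b) a direct analysis of the moduli of stable maps from $\bbP^1$ to $H_{n-1,m-1}$ in the remaining small degrees, pushing forward to the base and evaluating there. The exponent $n-m$ in the asymmetric term $(-1)^{n-m}Q_2(L_1^{-1})^{m-1}(1-L_1^{-1})^{n-m}$ of $F_2^Q$ plainly reflects curves lying in the locus $\{x_1=\cdots=x_m=0\}\cong\bbP^{n-m-1}$ where the first projection degenerates, which contributes only when $n>m$.

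Once the two relations have been verified, $\Phi$ descends to a surjection
$$\bar\Phi:\bbQ[L_1^{-1},L_2^{-1}]\dbb{Q_1,Q_2}/(F_1^Q,F_2^Q)\twoheadrightarrow QK(H_{n-1,m-1}).$$
Because $(F_1^Q,F_2^Q)\equiv(F_1,F_2)\pmod{(Q_1,Q_2)}$ and the classical pair $(F_1,F_2)$ is a regular sequence in $\bbQ[L_1^{-1},L_2^{-1}]$ (as $K(H_{n-1,m-1})$ is a zero-dimensional complete intersection of the expected dimension), the deformed pair is regular in the completed polynomial ring, so the left-hand side is a free $\bbQ\dbb{Q_1,Q_2}$-module of the same finite rank as $K(H_{n-1,m-1})$. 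Since $QK(H_{n-1,m-1})$ is free of the same rank over $\bbQ\dbb{Q_1,Q_2}$, the surjection $\bar\Phi$ must be an isomorphism. The main obstacle is the middle step: pinning down the precise form of the quantum corrections, and in particular matching the asymmetric coefficient in $F_2^Q$, requires an honest $K$-theoretic Gromov--Witten computation rather than any purely formal manipulation of the classical presentation.
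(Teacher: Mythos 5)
Your overall architecture is right: reduce to verifying that $F_1^Q$ and $F_2^Q$ vanish in $QK(H_{n-1,m-1})$, then promote the resulting surjection to an isomorphism by a Nakayama-type/freeness argument. That last step matches the paper's (the paper invokes the Nakayama-type result of Gu--Mihalcea--Sharpe--Zou, \cref{Nakatype}, whereas you argue directly that the deformed pair is a regular sequence in $\bbQ[L_1^{-1},L_2^{-1}]\dbb{Q_1,Q_2}$ so that the left-hand side is $\bbQ\dbb{Q_1,Q_2}$-free of the same rank as $QK$; these are essentially interchangeable once one has the classical presentation from \cref{Classical K}).

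The genuine gap is in the middle step, and you have flagged it yourself. Your proposal to produce the quantum corrections via (a) a $K$-theoretic projective-bundle formula and (b) a direct moduli-of-stable-maps analysis is not a proof: there is no off-the-shelf projective-bundle formula for small quantum $K$-theory that would hand you the correction term $-Q_2(1-L_1^{-1}L_2^{-1})$ in $F_1^Q$, and the ``curves in $\{x_1=\cdots=x_m=0\}$'' heuristic for the $(-1)^{n-m}Q_2(L_1^{-1})^{m-1}(1-L_1^{-1})^{n-m}$ term in $F_2^Q$ is a plausibility argument, not a computation. The paper closes this gap by an entirely different route: it writes down the small $K$-theoretic $J$-function of $H_{n-1,m-1}$ explicitly via the quantum Lefschetz principle (as a $(1,1)$-hypersurface in $\bbP^{n-1}\times\bbP^{m-1}$), exhibits explicit $\hbar$-difference operators $D_1, D_2$ annihilating it (\cref{Dmrelaton}), and then converts these into relations in $QK$ using \cref{quantum-K} (which rests on the reconstruction result of Iritani--Milanov--Tonita and Huq-Kuruvilla, and requires checking the $\hbar\to\infty$ vanishing, which is where the hypothesis $m\geq 3$ is used to rule out a nontrivial mirror map). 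Without something of comparable concreteness, your middle step does not produce the asserted polynomials and the proof does not go through.
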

\noindent 
On the one hand, there is an approach, known to experts, to derive a ring presentation  of  {the quantum $K$-theory of a Fano manifold $X$}. Namely, firstly one may compute the $K$-theoretic $J$-function $J_X$ of $X$; secondly, one may   find  the symbols of finite difference operators annihilating  $J_X$, which would give  relations in $QK(X)$ by  \cite[Proposition 2.10]{IMT} and \cite[Theorem 4.12]{HK24a}; thirdly, one may show those relations are sufficient by a Nakayama-type result in \cite[Appendix A]{GM+23}.  {We notice that when $K(X)$ is generated by line bundle classes, there has also been a heuristic argument on  obtaining the ring presentation of $QK(X)$ by using finite difference equations  in \cite{Lee1999}.} On the other hand, there are very few examples that could really be carried out  in this way,  including the type $A$ complete flag variety \cite[Appendix A]{AHK+25}. 
{Our \cref{thm: presentationQK} provides one new class of examples with this approach. We directly write down the $J$-function of $H_{n-1,m-1}$ by  the quantum Lefschetz principle \cite{Giv15, Giv20}.}    We notice that the  $K$-theoretic $J$-function has been calculated for type $A$ flag varieties \cite{GiLe, Tai} (see \cite{BrFi} for the quasi-map version for complete flag varieties).

\begin{remark}
 \cref{mainthmFln} can be generalized to Milnor hypersurfaces  $H_{n-1, m-1}$ as follows. 
 We first derive  the following ring presentation 
  $$QH(H_{n-1, m-1})\cong \bbQ[h_1, h_2]\dbb{q_1, q_2}/(h_2^m-q_2(h_1+h_2),  \sum_{k=0}^{m-1}(-1)^kh_1^{n-1-k}h_2^k-q_1-(-1)^{m-1}q_2h_1^{n-m}) $$
 by investigating  the quantum differential equations for Givental's (cohomological) $J$-function of $H_{n-1, m-1}$. Then we 
can  define the  map $\qch: QK(H_{n-1,m-1}) \rightarrow QH(H_{n-1,m-1}) $  by
  sending $L_{i}^{-1}$ to $e^{-h_{i}}$ and sending $Q_1$ (resp. $Q_2$) to $q_1 ( {\rm Td}_{q}(L_1^{\oplus n}))^{-1} * {\rm Td}_{q}(L_1 \otimes L_2)$  (resp. $q_2 ( {\rm Td}_{q}(L_2^{\oplus m}))^{-1} * {\rm Td}_{q}(L_1 \otimes L_2)$).  The strategy to show that  such ${\rm qch}$ gives a ring homomorphism  $\qch$ is similar to the case for $\Fln=H_{n-1, n-1}$. The full details will be slightly more involved, and we plan to address 
  the cohomological part in the study of $D$-module mirror symmetry for $H_{n-1, m-1}$ elsewhere. 
\end{remark}
The paper is organized as follows. In Section 2, we review the background of quantum cohomology and quantum $K$-theory. In Sections 3 and 4, we show the small quantum Chern character homomorphism for the cases $\bbP^n$ and $\Fln$ respectively. In Section 5, we derive a ring presentation of the small quantum $K$-theory of Milnor hypersurface $H_{n-1, m-1}$. Finally, we prove \cref{lemma--h1+h2nonzerodivisorQHFl} by studying mirror symmetry for $\Fln$.

\subsection*{Acknowledgements}

 The authors would like to thank Sergey Galkin, Alexander Givental, Tao Huang,  Hiroshi Iritani, Bai-Ling Wang and Rui Xiong  for  helpful discussions.
 The authors are  supported in part by the National Key Research and Development Program of China No.~2023YFA1009801.
H.~Ke is also supported in part by NSFC Grant 12271532. C. ~Li   is also supported in part by NSFC Grant 12271529.

\section{Preliminaries}

In the section, we review some basic facts on quantum cohomology and quantum $K$-theory, mainly following \cite{FuPa, Giv, Lee, IMT}.
For simplicity, we assume the topological $K$-theory of $X$ coincides with the algebraic $K$-theory of $X$, which is sufficient in the rest of this paper. We refer to \cite{Lee} for more details on the set-up of the quantum $K$-theory in general cases. 

\subsection{Cohomological/$K$-theoretic  Gromov-Witten invariants}
 Let  $\overline{\mathcal{M}}_{0,k}(X,\beta)$ denote the   moduli (Degline-Mumford) stack of  genus $0$, $k$-pointed  stable maps to $X$ of degree   $\beta\in H_{2}(X;\mathbb{Z})$. Let ${\rm ev}_{i}:\overline{\mathcal{M}}_{0,k}(X,\beta)\longrightarrow X $  be  the $i$-th evaluation map, and  $\mathcal{L}_i$ denote the universal cotangent line bundle on $\overline{\mathcal{M}}_{0,k}(X,\beta)$, where $1\leq i \leq k$. 
 Given nonnegative integers $d_i$ and classes $\xi_i\in H^{\rm ev}(X)$/$\gamma_i\in K(X)$, the cohomological/$K$-theoretic Gromov-Witten invariants  are respectively defined by  
\begin{align*}
    \langle \tau_{d_1}\xi_1, \tau_{d_2}\xi_2, \cdots,  \tau_{d_k}\xi_k  \rangle_{0,k, \beta}^{H}&=\int_{[\overline{\mathcal{M}}_{0,k}(X,\beta)]^{vir}} \prod_{i=1}^{k}(c_1(\mathcal{L}_i)^{d_i} \cup {\rm ev}_{i}^{*}(\xi_{i})),\\
     \langle \tau_{d_1}\gamma_1, \tau_{d_2}\gamma_2, \cdots,  \tau_{d_k}\gamma_k  \rangle_{0,k, \beta}&= \chi \bigg(\overline{\mathcal{M}}_{0,k}(X,\beta), \mathcal{O}^{vir}\otimes(\otimes_{i=1}^{k} \mathcal{L}_i^{\otimes d_i  }{\rm ev}_{i}^{*}(\gamma_i) )\bigg).
\end{align*}
Here   $[\overline{\mathcal{M}}_{0,k}(X,\beta)]^{vir}$ denotes the virtual fundamental class \cite{RuTi, BeFa,LiTi,FuOn}, which is of expected dimension $\dim X+k-3+\int_{\beta}c_1(X)$;   $\mathcal{O}^{vir}$  denotes  the virtual structure sheaf  \cite{Lee,BeFa}, and  { $\chi$ is the pushforward to a point $\mathrm{Spec}(\mathbb{C})$}.

Let $\overline{\mbox{NE}}(X)$ denote the Mori cone of effective curve classes. It follows from the definition that  cohomological/$K$-theoretic Gromov-Witten invariants of degree $\beta$ are vanishing unless $\beta\in\overline{\mbox{NE}}(X)$. 
We may  take  nef line bundles $L_i$'s, such that $\{h_i:=c_1(L_i)\}_{i=1}^r$ form a nef integral basis  of $H^{2}(X;\mathbb{Z})$/torison. We then introduce 
the Novikov variables $\{q_i\}_{i=1}^r$ (resp. $\{Q_i\}_{i=1}^r$) for quantum cohomology (resp. $K$-theory). For $\beta \in\overline{\mbox{NE}}(X)$, we have
 $d_i:=d_i(\beta)=\int_\beta h_i\geq 0$, and denote 
$$q^{\beta}:=\prod_{i=1}^{r}q_{i}^{d_i},\qquad Q^{\beta}:=\prod_{i=1}^{r}Q_{i}^{d_i}.$$  
 \subsection{Quantum cohomology }\label{subsec: QH}
 Let $\{\xi_i\}_{i=1}^N$ be a basis of $H^{\rm ev}(X)$, and $\{\xi^j\}_j$ be the dual basis of $H^{\rm ev}(X)$, where   $\int_{[X]} \xi_{b} \cup \xi^{a} =\delta_{a,b}$. Write $\mathbf{s}=\sum_{i=1}^Ns_{i}\xi_i \in H^{\rm ev}(X)$. The (even) big quantum cohomology  $QH_{\rm big}(X)=(H^{\rm ev}(X)\otimes \bbQ\dbb{\mathbf{q}, \mathbf{s}}, *_{\mathbf{s}})$ is a $\bbQ\dbb{\mathbf{s}, \mathbf{q}}$-algebra with the big quantum product defined by 
 \begin{align} \label{quantumcoh}
     \xi_{i} \ast_{\mathbf{s}} \xi_{j}:=\sum_{k=0}^{\infty}\sum_{\beta \in {\rm \overline{NE}}(X)} \sum_{a=1}^{N}\frac{q^{\beta}}{k!}  \langle\xi_i,\xi_j,\xi^{a},\mathbf{s},\cdots,\mathbf{s}\rangle_{0, k+3, \beta}^H \xi_{a}.
 \end{align}
  The restriction of $QH_{\rm big}(X)$ to $\mathbf{s}=0$, i.e. $\xi_i\ast\xi_j= \sum_{\beta, a}   \langle\xi_i,\xi_j,\xi^{a}\rangle_{0, 3, \beta}^H \xi_{a}{q^{\beta}}$, gives the small quantum cohomology  $QH(X)=(H^{\rm ev}(X)\otimes \bbQ\dbb{\mathbf{q}}, *)$.
  
For $\alpha\in H^{\rm ev}(X)$ and $f(x)=x^m$ where $m\geq 0$, we take the notation conventions 
$$f_{\rm cl}(\alpha)=\alpha^{\cup m}:=\underbrace{\alpha\cup\cdots \cup \alpha}_{m \,\,\rm \mbox{copies}},\qquad f(\alpha)=\alpha^{* m}:=\underbrace{\alpha*\cdots * \alpha}_{m \,\,\rm \mbox{copies}}.$$
We can take  similar conventions for formal power series in multi-variables by observing the following. For any $f(x_1, \ldots, x_k)\in \bbQ\dbb{x_1, \ldots, x_k}$ and any homogeneous $\alpha_i\in H^{\rm ev}(X)\setminus H^0(X)$, $1\leq i\leq k$, we have
\begin{enumerate} \label{clandqu}
    \item $f_{\mathrm{cl}}(\alpha_1,..,\alpha_k)\in H^{\rm ev}(X)$, by the nilpotency of $\alpha_i$'s in $H^{\rm ev}(X)$; 
     \item $f(\alpha_1,..,\alpha_k)\in QH(X)$ for the reason:   for each $q^\beta$, by the dimension axiom,   there are only finitely many $k$-tuples $(m_1,...,m_k)$ such that $q^\beta$ appears in $\alpha_1^{\ast m_1}\ast\cdots\ast\alpha_k^{\ast m_k}$.
\end{enumerate}
Moreover, under   the natural surjective ring  homomorphism $QH(X)\xrightarrow{\textrm{mod }q}H^{\rm ev}(X)$, we have $f(\alpha_1,...,\alpha_k)\mapsto f_{\mathrm{cl}}(\alpha_1,...,\alpha_k)$.

More generally, for  $\alpha\in H^2(X)$ and function   $f(x)$   holomorphic at $x=0$,   we can define $f_{\mathrm{cl}}(\alpha)\in H^{\rm ev}(X)$ and $f(\alpha)\in QH(X)$ by the Taylor expansion of $f(x)$ at $x=0$. In this article, we    consider the cases $f(x)=e^x$, $\frac{1-e^{-x}}{x}$ or $\frac{x}{1-e^{-x}}$,   giving $f(\alpha)\in QH(X)$.

\subsection{Quantum $K$-theory}
The definition of quantum $K$-theory is more involved. Let  
      $\mathbf{t}=\sum_{i=1}^Nt_{i}\gamma_{i}\in K(X)$. We first introduce  the quantum $K$-potential, which is  a generating series of genus-zero $K$-theoretic Gromov-Witten invariants given by  
$$\mathcal{G}(\mathbf{t}, \mathbf{Q}):=\frac{1}{2}\sum_{i, j}t_it_j g_{ij}+\sum_{k=0}^{\infty}\sum_{\beta \in {\rm \overline{NE}}(X)}\frac{Q^{\beta}}{k!} \langle \mathbf{t},  \cdots,  \mathbf{t}  \rangle_{0,k, \beta}$$
with $g_{ij}=g(\gamma_i, \gamma_j):=\chi(X, \gamma_i\otimes\gamma_j)$ being the classical metric on $K(X)$ given by the Euler characteristic map. 
 
 Denote $\partial_i:=\frac{\partial}{\partial t_i}$.
  The quantum $K$-metric is defined by 
$$G_{ij}=G(\gamma_i,\gamma_j):=\partial_{i}\partial_{j}\mathcal{G}.$$
We have $G_{ij}\equiv g_{ij} \mod \mathbf{Q}$, and $\big(G_{ij}\big)\in GL_N(\bbQ\dbb{\mathbf{Q}, \mathbf{t}})$.
\begin{defn}
 The big quantum $K$-theory $QK_{big}(X):=(K(X)\otimes \dbb{\mathbf{Q}, \mathbf{t}},\star_{\mathbf{t}})$ is a $\dbb{\mathbf{Q}, \mathbf{t}}$-algebra with the big quantum $K$-product $\gamma_i \star_{\mathbf{t}} \gamma_j$ defined by using the quantum $K$-metric, 
$$G(\gamma_i \star_{\mathbf{t}} \gamma_j,\gamma_k):=\partial_i \partial_j \partial_k \mathcal{G}.$$
\end{defn}

 The big quantum $K$-theory $QK_{\mathrm{big}}(X)$ is a commutative and associative algebra with identity $\mathbf{1}$ (lying in $K(X)$).  Its restriction to $\mathbf{t}=0$ gives the small quantum $K$-theory 
 $QK(X)=(K(X)\otimes \mathbb{Q}\dbb{\mathbf{Q}},\star)$.

\subsection{Quantum connection and $K$-theoretic $J$-function}
Let $\hbar$ be a formal variable. The quantum connection is given by the operators
$$\nabla_{i}^{\hbar}:={(1-\hbar)}\frac{\partial}{\partial t_{i}}-\gamma_{i} \star_{\mathbf{t}},  ~~~1 \leq i\leq N,
$$
acting on $K(X)\otimes \mathbb{Q}[\hbar,\hbar^{-1}]\dbb{\mathbf{Q},\mathbf{t}}$.  It  could also be seen as a connection on the tangent bundle $T_{K(X)}$ by identifying $\gamma_{i}$ with $\frac{\partial}{\partial t_{i}}$. This quantum  connection is flat, with   the fundamental solution   $\{S_{ab}\}_{1 \leq a,b \leq N }$  to the $K$-theoretic quantum differential equation $\nabla_{i}^{\hbar}(\sum_{b=1}^{N}S_{ab}\gamma_{b})=0$ given by

$$ S_{ab}(\mathbf{t},\mathbf{Q})=g_{ab}+\sum_{l=0}^{\infty}\sum_{d \in {\rm \overline{NE}(X)}}\frac{Q^{d}}{l!}\langle \gamma_a, \mathbf{t}, \cdots,  \mathbf{t},\frac{\gamma_b}{1-\hbar\mathcal{L}} \rangle_{0,l+2, d}.$$
Here  $\frac{\gamma_b}{1-\hbar\mathcal{L}}=\sum_{n=0}^{\infty}\gamma_b \hbar^{n} \mathcal{L}^{\otimes n}$,  where $\mathcal{L}$ is the universal cotangent line bundle at the $(l+2)$th-marked point on  $\overline{\mathcal{M}}_{0,l+2}(X, d)$. {Following \cite{IMT}, we introduce an endomorphism-valued function $T\in End(K(X))\otimes \mathbb{Q}(\hbar)\dbb{Q,t}$ by the formula $g(T\gamma_a,\gamma_b)=S_{ab}$.}

\begin{defn}
    The  big $K$-theoretic $J$-function  \footnote{The $K$-theoretic $J$-function in \cite[Definition 2.4]{IMT} is equal to our $(1-\hbar)J$ with $\hbar=q$.} of $X$
    is defined by 
    \begin{align*}
        J_X(\hbar, \mathbf{Q}, \mathbf{t})&:=T(\mathbf{1})
        =\mathbf{1} +\sum_{l=0}^{\infty}\sum_{d \in {\rm \overline{NE}(X)}}{\sum_{i,j}}\frac{Q^{d}}{l!}\langle \mathbf{1}, \mathbf{t}, \cdots,  \mathbf{t},\frac{\gamma_i}{1-\hbar \mathcal{L}} \rangle_{0,l+2, d} ~g^{ij} \gamma_{j}.
    \end{align*}
   The restriction $J_X(\hbar, \mathbf{Q})=J_X(\hbar, \mathbf{Q},\mathbf{t})|_{\mathbf{t}=0}$ is called the small $K$-theoretic $J$-function.
\end{defn}
 Givental and Tonita \cite{GiTo} showed that the space $T(K(X) \otimes \mathbb{Q}[\hbar,\hbar^{-1}]\dbb{\mathbf{Q},\mathbf{t}})$ is preserved by the operator $L_i^{-1}\hbar^{Q_i \partial_{Q_i}}$ for all $1 \leq i \leq r$, 
where  {$L_i^{-1}$ acts by tensor product, and} $\hbar^{Q_{i}\partial_{Q_i}}$ is the $\hbar$-shift operator acting on functions in $Q_1,Q_2,\cdots,Q_r$ by
    $$f(Q_1,Q_2,\cdots,Q_r) \mapsto f(Q_1,\cdots,Q_{i-1},\hbar Q_{i},Q_{i+1},\cdots, Q_r).$$
 {Set $A_{i}:=\left(T^{-1}\circ L_{i}^{-1}\hbar^{Q_i\partial_{Q_i}} \circ T \right)|_{\mathrm{t}=0,\hbar=1}$, which is an element in $\mathrm{End}(K(X)) \otimes \mathbb{Q}\dbb{\mathbf{Q}}$. Suppose that a $\hbar$-difference operator $D\in \mathbb{Q}[\hbar,\mathbf{Q}]\left \langle L_i^{-1}\hbar^{Q_i\partial_{Q_i}} \right \rangle$ satisfies 
 \begin{align} \label{D-module relation}
          D(L_1^{-1}\hbar^{Q_1 \partial_{Q_1}},\cdots,L_r^{-1}\hbar^{Q_r\partial_{Q_r}},\hbar, Q_1, \cdots, Q_r) J_{X}(\hbar, \mathbf{Q})=0.
      \end{align}
Then $D(A_{1},\cdots, A_{r},1, Q_1, \cdots, Q_r)(\mathbf{1})=0$ by the definition of $A_i$'s. To simplify $A_i$, we note that if the coefficient of $Q^d$ in $L_i^{-1}\hbar^{Q_i\partial_{Q_i}}(J_{X}(\hbar,\mathbf{Q}))$ vanish when $\hbar=\infty$ for all $d>0$, then $A_i=(L_i^{-1}\star)$ by \cite[Proposition 2.10]{IMT} and \cite[Theorem 4.12]{HK24a}. In summary, we have 
 
\begin{prop}\label{quantum-K}%[Iritani-Milanov-Tonita, \textcolor{red}{HK}]    
    Suppose that: (i) $D\in \mathbb{Q}[\hbar,\mathbf{Q}]\left \langle L_i^{-1}\hbar^{Q_i\partial_{Q_i}} \right \rangle$ satisfies \cref{D-module relation}; (ii) for $1\leq i \leq r$,  $\mathrm{Coeff}_{Q^d}\left(L_i^{-1}\hbar^{Q_i\partial_{Q_i}}(J_{X}(\hbar,\mathbf{Q}))\right)|_{\hbar=\infty}=0$ for all $d\in \overline{{\rm NE}}(X)\setminus\{0\}$. Then 
     \begin{align}
          D(L_1^{-1} ,\cdots, L_r^{-1},1, Q_1,\cdots, Q_r)(\mathbf{1})=0\quad\text{in }QK(X).
      \end{align} 
\end{prop}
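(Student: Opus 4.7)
The plan is to follow the two-step strategy already sketched in the paragraph preceding the statement. The first step transfers the $D$-module relation on the $J$-function into an operator identity involving the conjugated shift operators $A_i$, using that $T(\mathbf{1})=J_X$ and that $T$ intertwines each $L_i^{-1}\hbar^{Q_i\partial_{Q_i}}$. The second step identifies each $A_i$ with quantum $K$-multiplication by $L_i^{-1}$ via the cited results of Iritani-Milanov-Tonita and Huang-Ke.

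For the first step, I would begin by noting that, by the Givental-Tonita invariance recalled just above the statement, the operator $L_i^{-1}\hbar^{Q_i\partial_{Q_i}}$ preserves the image of $T$, so the conjugate $\tilde{A}_i := T^{-1}\circ L_i^{-1}\hbar^{Q_i\partial_{Q_i}}\circ T$ is a well-defined endomorphism of $K(X)\otimes \mathbb{Q}[\hbar,\hbar^{-1}]\dbb{\mathbf{Q},\mathbf{t}}$. Since the coefficients of $D$ lie in $\mathbb{Q}[\hbar,\mathbf{Q}]$ and hence are scalar with respect to $T$, conjugation distributes through $D$ to give
\[
T^{-1}\circ D\bigl(L_1^{-1}\hbar^{Q_1\partial_{Q_1}},\dots,L_r^{-1}\hbar^{Q_r\partial_{Q_r}},\hbar,\mathbf{Q}\bigr)\circ T \;=\; D(\tilde{A}_1,\dots,\tilde{A}_r,\hbar,\mathbf{Q}).
\]
Applying both sides to $\mathbf{1}$ and using $T(\mathbf{1})=J_X$ together with hypothesis~(i), one obtains $D(\tilde{A}_1,\dots,\tilde{A}_r,\hbar,\mathbf{Q})(\mathbf{1})=0$. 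Specializing to $\mathbf{t}=0$ and $\hbar=1$ yields $D(A_1,\dots,A_r,1,\mathbf{Q})(\mathbf{1})=0$ in $K(X)\otimes \mathbb{Q}\dbb{\mathbf{Q}}$.

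The second step is to identify $A_i$ with $(L_i^{-1}\star)$ acting on $QK(X)$. Here I would directly invoke \cite[Proposition 2.10]{IMT} together with \cite[Theorem 4.12]{HK24a}: these results assert that, whenever the coefficients of $Q^d$ ($d\neq 0$) in $L_i^{-1}\hbar^{Q_i\partial_{Q_i}} J_X$ vanish at $\hbar=\infty$, the conjugated shift operator $A_i$ coincides with the quantum $K$-product operator by $L_i^{-1}$. Condition~(ii) is precisely this asymptotic vanishing, so the cited theorems apply and give $A_i=(L_i^{-1}\star)$ for $1\le i\le r$. Substituting into the previous display yields $D(L_1^{-1},\dots,L_r^{-1},1,Q_1,\dots,Q_r)(\mathbf{1})=0$ in $QK(X)$.

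The main obstacle is the identification $A_i=(L_i^{-1}\star)$: without the asymptotic hypothesis (ii), the operator $A_i$ agrees with $L_i^{-1}\star$ only up to a gauge transformation of Birkhoff type coming from the freedom in choosing a fundamental solution, and it is essentially for this reason that we must pass through \cite{IMT, HK24a} rather than argue it directly from the definition of $T$. The conjugation argument in the first step is by contrast purely formal once the Givental-Tonita invariance has been recorded.
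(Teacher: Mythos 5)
Your proposal follows the same two-step strategy as the paper: conjugate the difference operator by $T$ to obtain $D(A_1,\dots,A_r,1,\mathbf{Q})(\mathbf{1})=0$, then invoke \cite[Proposition 2.10]{IMT} and \cite[Theorem 4.12]{HK24a} under hypothesis (ii) to identify each $A_i$ with $(L_i^{-1}\star)$. This is essentially the paper's own argument, which is laid out in the paragraph immediately preceding the proposition.
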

 }

\section{Quantum Chern character for  $\mathbb{P}^{n}$ }

In this section, as a warm-up, we construct a quantum version of  Chern character for $\mathbb{P}^n$, i.e. a ring homomorphism $QK(\mathbb{P}^n)\xrightarrow{\qch} QH(\mathbb{P}^n)$ that is a lift of $K(\mathbb{P}^n)\xrightarrow{\ch}H^*(\mathbb{P}^n)=H^{ev}(\mathbb{P}^{n})$.
 
Let us consider line bundle class $L:=[\mathcal{O}(1)]\in K(\mathbb{P}^{n})$ and hyperplane class $h:=c_1(L) \in H^{2}(\mathbb{P}^{n})$.  We have  the following ring presentations (see e.g. \cite{BuMi}):
\begin{align*}
    QK(\mathbb{P}^{n})&\cong\mathbb{Q}[L^{-1}]\dbb{Q}/((1-L^{-1})^{n+1}-Q),\qquad
    QH(\mathbb{P}^{n})\cong\mathbb{Q}[h]\dbb{q}/(h^{n+1}-q),
\end{align*}
where $L^{-1}=[\mathcal{O}(-1)]$ is the classical inverse of $L$ in $K(\mathbb{P}^{n})$.

We define a map $\qch:QK(\mathbb{P}^{n})\longrightarrow QH(\mathbb{P}^{n})$ as follows. We firstly set
\begin{align*}
   \qch(L^{-1})=e^{-h}\textrm{ and }\qch(Q)=q\left(\frac{1-e^{-h}}{h}\right)^{n+1},
\end{align*}
which both lie in $QH(\bbP^n)$ by our convention in \cref{subsec: QH}.
Secondly, for any $\alpha\in QK(\mathbb{P}^n)$, the above ring presentation implies that there exists $\Phi_\alpha(x,y)\in\mathbb{Q}[x]\dbb{y}$ such that $\alpha=\Phi_\alpha(L^{-1},Q)$. We then set
\[
\qch(\alpha)=\Phi_\alpha\left(\qch(L^{-1}),\qch(Q)\right).
\]
\begin{thm} \label{thm: Pn}
  The map $\qch$ is independent of the choice of $\Phi_\alpha$'s and is a ring homomorphism. Moreover, its classical limit gives ${\rm ch}$.
\end{thm}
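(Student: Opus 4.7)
The plan is to realize $\qch$ as the map induced from an obvious substitution ring homomorphism
$\widetilde{\qch}: \bbQ[x]\dbb{y} \to QH(\bbP^n)$
sending $x \mapsto e^{-h}$ and $y \mapsto q\bigl(\tfrac{1-e^{-h}}{h}\bigr)^{n+1}$. This target substitution is legitimate because, by the convention of Section~2.2, $e^{-h}\in QH(\bbP^n)$ is a well-defined element, while $q\bigl(\tfrac{1-e^{-h}}{h}\bigr)^{n+1}$ lies in the $q$-adic maximal ideal of $QH(\bbP^n)$, so any formal power series in $y$ converges in the $q$-adic topology. Once I show that $\widetilde{\qch}$ descends through the quotient $\bbQ[x]\dbb{y}\twoheadrightarrow QK(\bbP^n)$, both the independence of the chosen lift $\Phi_\alpha$ and the ring homomorphism property of $\qch$ become automatic.

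Descent reduces to verifying the single identity
\[
(1-e^{-h})^{*(n+1)} \;=\; q \cdot \Bigl(\tfrac{1-e^{-h}}{h}\Bigr)^{*(n+1)} \quad\text{in }QH(\bbP^n),
\]
since the defining relation of $QK(\bbP^n)$ is $(1-L^{-1})^{n+1} - Q = 0$. The key step is a \emph{substitution lemma}: the map $\bbQ\dbb{x} \to QH(\bbP^n)$, $f(x)=\sum a_k x^k \mapsto f(h):=\sum a_k h^{*k}$, is a well-defined ring homomorphism. Convergence in the $q$-adic topology follows from the presentation relation $h^{*(n+1)}=q$, which forces $h^{*k}$ to have $q$-adic valuation $\lfloor k/(n+1)\rfloor$; multiplicativity $f_1(h) * f_2(h) = (f_1 f_2)(h)$ then follows by distributivity and $q$-adic continuity of $*$. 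Applying this to the trivial factorization $1-e^{-x}=x\cdot\tfrac{1-e^{-x}}{x}$ in $\bbQ\dbb{x}$ yields $1-e^{-h} = h * \tfrac{1-e^{-h}}{h}$; raising both sides to the $(n+1)$-th quantum power and invoking $h^{*(n+1)}=q$ gives the desired identity.

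For the classical limit, reduction mod $q$ kills $\qch(Q) = q \cdot(\cdots)$, matching the image of $Q\equiv 0 \pmod{Q}$ under $\ch$; and $\qch(L^{-1}) = e^{-h}$ reduces to $e^{-h}\in H^*(\bbP^n)$ (where $*$ collapses to $\cup$ modulo $q$), which is exactly $\ch(\mathcal{O}(-1)) = \ch(L^{-1})$. Since the composition $(\textrm{mod }q)\circ\qch$ agrees with $\ch\circ(\textrm{mod }Q)$ on the generators $L^{-1}$ and $Q$ of $QK(\bbP^n)$ as $\bbQ$-algebras, the diagram \eqref{commdiag} commutes on all of $QK(\bbP^n)$.

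The main technical point is the substitution lemma, i.e., the careful interchange of ordinary power-series multiplication with the quantum product $*$; once $q$-adic convergence is tracked, everything else reduces to two short algebraic manipulations. I do not foresee a serious obstacle for $\bbP^n$, and in fact this same mechanism, suitably generalized, underlies the arguments for $\Fln$ and $H_{n-1,m-1}$ in the later sections.
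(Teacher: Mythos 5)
Your proposal is correct and takes essentially the same route as the paper: the crux in both is the single identity $(1-e^{-h})^{*(n+1)}=q\left(\frac{1-e^{-h}}{h}\right)^{*(n+1)}$, proved by factoring $1-e^{-x}=x\cdot\frac{1-e^{-x}}{x}$ in $\bbQ\dbb{x}$, raising to the $(n+1)$-th quantum power, and invoking $h^{*(n+1)}=q$. You simply phrase the reduction more explicitly via a universal substitution homomorphism $\bbQ[x]\dbb{y}\to QH(\bbP^n)$ and spell out the $q$-adic convergence that the paper leaves implicit in its Section~2 conventions.
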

\begin{proof}
    To prove the first statement, by the ring presentation for $QK(\mathbb{P}^n)$, it suffices to show that $\left(1-\qch(L^{-1})\right)^{n+1}=\qch(Q)$, or equivalently, $\left(1-e^{-h}\right)^{n+1}=q\left(\frac{1-e^{-h}}{h}\right)^{n+1}$.

    The equality $1-e^{-x}=x\cdot\frac{1-e^{-x}}{x}$ in $\mathbb{Q}\dbb{x}$ implies that $1-e^{-h}=h\star\frac{1-e^{-h}}{h}$. Thus we have
    \[
    \left(1-e^{-h}\right)^{n+1}=h^{n+1}*\left(\frac{1-e^{-h}}{h}\right)^{n+1}=q\left(\frac{1-e^{-h}}{h}\right)^{n+1}.
    \]
    In the last equality, we use the relation $h^{n+1}=q$ in $QH(\mathbb{P}^{n})$. This proves the first statement.
    
The second statement follows directly from the definition of $\qch$. This finishes the proof of the proposition.
\end{proof}

\begin{remark}\label{rmk:Pnunique}
    The image $\qch(Q)$ is actually uniquely determined by $\qch(L^{-1})=e^{-h}$ and the requirement that $\qch$ is a ring homomorphism. Indeed, that $\qch$ is a ring homomorphism implies that $\left(1-\qch(L^{-1})\right)^{n+1}=\qch(Q)$. Together with the relation $h^{n+1}=q$ in $QH(\mathbb{P}^{n})$, we obtain
    \[
    \qch(Q)* h^{n+1}=q\left(1-e^{-h}\right)^{n+1}.
    \] 
    This equation for $\qch(Q)$ has a solution $\qch(Q)=q\left(\frac{1-e^{-h}}{h}\right)^{n+1}$. Moreover, noting $QH(\mathbb{P}^{n})$ is a $\mathbb{Q}\dbb{q}$-algebra, the relation $h^{n+1}=q$ implies that $h$ is NOT a zero divisor in $QH(\mathbb{P}^{n})$. As a consequence, the equation for $\qch(Q)$ has a unique solution.
\end{remark}

\begin{remark}\label{rmk: explainPn}
    The factor $\left(\frac{1-e^{-h}}{h}\right)^{n+1}$ in $\qch(Q)$ can be interpreted as a quantum version of the inverse of Todd class of $T_{\mathbb{P}^n}$. On the one hand, the factor is equal to $f(h)$ with $f(x)=\left(\frac{1-e^{-x}}{x}\right)^{n+1}$. On the other hand, the Euler exact sequence $$0\to\mathcal{O}\to L^{\oplus(n+1)}\to T\mathbb{P}^n\to 0$$ gives
\[
\Td(T_{\mathbb{P}^n})^{-1}=\left({\rm Td}(L)^{n+1}\right)^{-1}=f_{\mathrm{cl}}(h).
\]
So $\left(\frac{1-e^{-h}}{h}\right)^{n+1}$ is mapped to $\Td(T_{\mathbb{P}^n})^{-1}$ via $QH(\mathbb{P}^{n})\xrightarrow{\textrm{mod }q}H^*(\mathbb{P}^n)$.
\end{remark}

\section{Quantum Chern character for $\Fln$ } \label{subsec: QchFl}

In this section, we construct a quantum version of Chern character  for the incidence variety $\Fln=\{ V_{1} \leq V_{n-1} \leq \mathbb{C}^{n}| \dim V_{1}=1,\dim V_{n-1}=n-1\}$, i.e. a ring homomorphism $QK(\Fln)\xrightarrow{\qch} QH(\Fln)$ that gives a lift of $K(\Fln)\xrightarrow{\ch}H^*(\Fln)=H^{\rm ev}(\Fln)$. 

Let   $L_1:=[(\mathcal{S}_1)^{\vee}], L_2:=[\underline{\mathbb{C}^{n}}/\mathcal{S}_{n-1}]\in K(\Fln) $, where $\mathcal{S}_1$ and $\mathcal{S}_{n-1}$  are the tautological vector subbundles of $\Fln $. Note that $\{h_a:=c_1(L_a)\}_{1\leq a\leq 2}$ form a nef basis of $H^{2}(\Fln, \bbZ)$.  Recall that $q_a$ (resp. $Q_a$), $1\leq a\leq 2$, denote the corresponding   Novikov variables  in the quantum cohomology (resp. $K$-theory) of $\Fln$. We have  

 \begin{prop}[\protect{\cite[Proposition 7.2]{ChPe}}] \label{prop: QHFl}
   The small quantum cohomology ring   of $\Fln$ is canonically given by
$      QH(\Fln) \cong \mathbb{Q}[h_1,h_2]\dbb{q_1,q_2}/(f_{1}^{q},f_{2}^{q}),
$ 
where 
\begin{align*}
  f_1^{q}(h_1,h_2,q_1,q_2)&=h_2^{n}-q_2(h_1+h_2),\\
  f_2^{q}(h_1,h_2,q_1,q_2)&=\sum_{l=0}^{n-1}(-1)^{n-1-l}h_1^{l}h_2^{n-1-l}-q_1-(-1)^{n-1}q_2.   
\end{align*}
\end{prop}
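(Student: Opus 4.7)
Since the result is attributed to \cite{ChPe}, the plan is to sketch the standard route for partial flag varieties of type $A$, in three steps: (a) identify the classical presentation of $H^{\rm ev}(\Fln)$ from the Chern-class filtration $\mathcal{S}_1\subset\mathcal{S}_{n-1}\subset\underline{\bbC^n}$; (b) verify that the proposed deformations $f_1^q$ and $f_2^q$ actually vanish in $QH(\Fln)$; (c) conclude by a Siebert--Tian/Nakayama-style rank argument.

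For (a), the identity $c(\mathcal{S}_1)\cdot c(\mathcal{S}_{n-1}/\mathcal{S}_1)\cdot c(L_2)=1$ in $H^*(\Fln)$ forces
\[
c(\mathcal{S}_{n-1}/\mathcal{S}_1)=\frac{1}{(1-h_1)(1+h_2)}=\sum_{i,j\geq 0}h_1^i(-h_2)^j.
\]
The rank constraint $\mathrm{rk}(\mathcal{S}_{n-1}/\mathcal{S}_1)=n-2$ then gives two classical relations: $c_{n-1}=0$ reads $\sum_{l=0}^{n-1}(-1)^{n-1-l}h_1^l h_2^{n-1-l}=0$, and combining with $c_n=0$ yields $h_2^n=0$. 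A standard Schubert count confirms that the resulting quotient has $\bbQ$-dimension $n(n-1)=\dim_{\bbQ}H^{\rm ev}(\Fln)$, so this is the full classical presentation; specializing $f_1^q,f_2^q$ at $q_1=q_2=0$ recovers it.

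For (b), the cleanest route parallels the $K$-theoretic approach used later in the paper (compare \cref{quantum-K}): write down the cohomological small $J$-function $J_{\Fln}(\hbar,q_1,q_2)$ via the Givental--Kim hypergeometric formula, find two differential operators $D_1,D_2\in\bbQ[q_1,q_2,\hbar]\langle q_i\partial_{q_i}\rangle$ annihilating $J_{\Fln}$, and take their symbols ($\hbar\to 0$, $q_i\partial_{q_i}\rightsquigarrow h_i\,\ast$) to produce relations in $QH(\Fln)$. A direct inspection of the two coupled hypergeometric factors associated to the Euler-type sequences for $\mathcal{S}_1$, $\mathcal{S}_{n-1}/\mathcal{S}_1$ and $L_2$ identifies the two relevant operators and shows that their symbols are exactly $f_1^q$ and $f_2^q$. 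Alternatively, one could verify the two relations directly by computing the handful of low-degree three-point Gromov--Witten invariants permitted by the dimension/degree constraint on $\Fln$.

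For (c), we use the presentation lemma: if $\varphi\colon\bbQ[h_1,h_2]\dbb{q_1,q_2}/(f_1^q,f_2^q)\to QH(\Fln)$ is a $\bbQ\dbb{q_1,q_2}$-algebra map whose reduction modulo $(q_1,q_2)$ is an isomorphism of finite-dimensional $\bbQ$-algebras, then $\varphi$ itself is an isomorphism, provided the source is already a free $\bbQ\dbb{q_1,q_2}$-module of the same rank. Freeness of rank $n(n-1)$ follows from a Gr\"obner-basis check, since $(h_2^n,\sum(-1)^{n-1-l}h_1^l h_2^{n-1-l})$ is a regular sequence in $\bbQ[h_1,h_2]$; surjectivity comes from the $q$-adic lifting of the classical generation of $H^{\rm ev}(\Fln)$ by $h_1,h_2$. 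The main obstacle is step (b): extracting the exact symbols $f_1^q, f_2^q$ from the Picard--Fuchs system requires careful bookkeeping with the two coupled hypergeometric factors and ruling out, on grading grounds, any additional lower-order quantum corrections that are a priori allowed by the bi-grading in $q_1,q_2$. This is what pins down the precise shape $h_2^n-q_2(h_1+h_2)$ (rather than, say, $h_2^n-q_2 h_2$) and the sign $(-1)^{n-1}q_2$ appearing in $f_2^q$.
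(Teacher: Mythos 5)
The paper does not actually prove this proposition: it cites \cite[Proposition 7.2]{ChPe}, where Chaput--Perrin derive the presentation for adjoint varieties via the quantum Chevalley formula and curve-neighborhood arguments. Your route through Givental--Kim $J$-functions and Picard--Fuchs symbols is therefore a genuinely different approach from the one the paper relies on, and it is also different from the $K$-theoretic argument the paper itself carries out later (quantum Lefschetz plus finite-difference symbols, cf. \cref{quantum-K,Dmrelaton}). Your alternative suggestion -- verify $f_1^q, f_2^q$ by directly computing the few degree-allowed three-point invariants -- is in fact much closer in spirit to what \cite{ChPe} actually do.

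Parts (a) and (c) of your sketch are correct: the Whitney/rank computation does produce the classical relations $h_2^n=0$ and $\sum_{l}(-1)^{n-1-l}h_1^lh_2^{n-1-l}=0$ (one can get them even more directly as the Grothendieck relation for $\Fln=\bbP(\mathcal{U}_{n-1})$ together with the pullback of $h_2^n=0$ from the base $\bbP^{n-1}$), the Gr\"obner count $n(n-1)$ is right, and the Nakayama/deformation step is standard once the two quantum relations are known to hold. The genuine gap is step (b), and you say so yourself (``the main obstacle''). Asserting that ``direct inspection'' of the two hypergeometric factors yields exactly $f_1^q$ and $f_2^q$ is precisely the content that needs to be proved, and it is not pinned down merely by bi-grading: the grading constrains the allowed monomials in $q_1,q_2$ but not their coefficients. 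For instance, the distinction between $h_2^n-q_2(h_1+h_2)$ and $h_2^n-cq_2h_1-c'q_2h_2$ requires a computation, as does the coefficient of $q_1$ and $q_2$ in $f_2^q$. One cheap observation you could exploit, and which the paper records right after the proposition, is the self-duality automorphism of $\Fln$ swapping $(h_1,q_1)\leftrightarrow(h_2,q_2)$: it transports $f_1^q$ to $h_1^n-q_1(h_1+h_2)$ (\cref{cor--h1h2QHrelationFl}) and fixes the relative sign $(-1)^{n-1}$ between $q_1$ and $q_2$ in $f_2^q$ (since the alternating sum picks up a factor $(-1)^{n-1}$ under the swap), reducing the remaining work to fixing a single overall constant in each relation -- e.g.\ by one degree-$\beta_1$ and one degree-$\beta_2$ three-pointed Gromov--Witten invariant. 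Without either that computation or an explicit derivation of the Picard--Fuchs symbols, the argument is a plausible outline but not a proof.
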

 \noindent The   isomorphism in the above proposition is canonical in the sense  that the hyperplane class $h_a$ in $QH(\Fln)$ is maps to the generator $h_a$ on the right, for $a=1, 2$. Moreover, there is a canonical selfduality of $\Fln$, which induces an automorphism of the small quantum cohomology by interchanging $(h_1, q_1)$ and $(h_2, q_2)$. As an immediate consequence, we have 

 \begin{cor}\label{cor--h1h2QHrelationFl}
    We have $h_a^{n}=q_a(h_1+h_2)$ in $QH(\Fln)$ for $a=1,2$.
\end{cor}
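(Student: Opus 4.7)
The case $a=2$ is immediate: the first relation $f_1^q = h_2^n - q_2(h_1+h_2) = 0$ in the presentation of Proposition 4.1 is literally $h_2^n = q_2(h_1+h_2)$. So the content of the corollary is the $a=1$ statement $h_1^n = q_1(h_1+h_2)$, and there are two natural routes to it.

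The conceptual route, which is the one hinted at in the paragraph preceding the corollary, is to invoke the self-duality $\Fln \to \Fln$ sending $(V_1 \le V_{n-1} \le \bbC^n)$ to $(V_{n-1}^{\perp} \le V_1^{\perp} \le (\bbC^n)^{*})$. Pullback along this involution exchanges the two tautological line bundles, hence swaps $h_1 \leftrightarrow h_2$, and by functoriality of Gromov--Witten invariants it swaps the Novikov variables $q_1 \leftrightarrow q_2$. The $a=1$ identity is then obtained by applying this automorphism to the relation $h_2^n = q_2(h_1+h_2)$. To make this fully rigorous one only has to verify that the ideal $(f_1^q, f_2^q)$ is stable under the swap $(h_1,q_1) \leftrightarrow (h_2,q_2)$; the first generator is clearly interchanged with $h_1^n - q_1(h_1+h_2)$ (the desired relation), and the second generator goes to $\pm f_2^q$ after re-indexing the sum $l \mapsto n{-}1{-}l$, so the automorphism is well defined on the quotient.

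A purely algebraic alternative, which avoids invoking the geometric self-duality, is to derive $h_1^n = q_1(h_1+h_2)$ directly from $(f_1^q, f_2^q)$. The plan is to multiply
\[
f_2^q = \sum_{l=0}^{n-1}(-1)^{n-1-l}h_1^{l}h_2^{n-1-l} - q_1 - (-1)^{n-1}q_2
\]
by $(h_1+h_2)$. The left-hand side telescopes: a short reindexing shows
\[
(h_1+h_2)\sum_{l=0}^{n-1}(-1)^{n-1-l}h_1^{l}h_2^{n-1-l} = h_1^n + (-1)^{n-1}h_2^n.
\]
Combined with $f_2^q = 0$ this gives $h_1^n + (-1)^{n-1}h_2^n = (q_1 + (-1)^{n-1}q_2)(h_1+h_2)$, and then substituting $h_2^n = q_2(h_1+h_2)$ from $f_1^q = 0$ cancels the $(-1)^{n-1}$ terms and leaves $h_1^n = q_1(h_1+h_2)$.

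There is no real obstacle here: the telescoping identity is routine, and the only thing to be mildly careful about is bookkeeping with the sign $(-1)^{n-1}$ (distinguishing parities of $n$) and with the fact that we are computing the classical product $h_2^n$, which equals its quantum counterpart only after the $f_1^q$ relation is imposed. I would present the algebraic derivation as the main proof, since it is self-contained given Proposition 4.1, and mention the self-duality viewpoint as a remark to explain the symmetry.
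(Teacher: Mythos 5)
Both of your routes are correct, and the telescoping computation checks out: one verifies
\begin{equation*}
(h_1+h_2)*\sum_{l=0}^{n-1}(-1)^{n-1-l}h_1^{*l}*h_2^{*(n-1-l)} = h_1^{*n}+(-1)^{n-1}h_2^{*n},
\end{equation*}
so that $f_2^q=0$ gives $h_1^{*n}+(-1)^{n-1}h_2^{*n}=(q_1+(-1)^{n-1}q_2)*(h_1+h_2)$, and substituting $h_2^{*n}=q_2(h_1+h_2)$ from $f_1^q=0$ isolates $h_1^{*n}=q_1(h_1+h_2)$. (Your parenthetical worry about ``classical versus quantum'' products is a non-issue: in the presentation of \cref{prop: QHFl} the ring multiplication is by definition the small quantum product, so nothing extra needs to be imposed.) The paper's own argument is your first, conceptual route: the $a=2$ case is read off from $f_1^q$, and the $a=1$ case follows by applying the automorphism of $QH(\Fln)$ induced by the canonical self-duality of $\Fln$, which swaps $(h_1,q_1)\leftrightarrow(h_2,q_2)$. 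Your preference for the algebraic derivation as the main argument is a genuine, if mild, departure: it buys self-containedness (no appeal to the geometric involution or to the symmetry of the ideal), at the cost of a slightly longer computation, whereas the paper's route makes the corollary an immediate consequence of stated structure. Your sign-bookkeeping for the ideal symmetry is also right: under the swap and re-indexing $l\mapsto n-1-l$ the generator $f_2^q$ maps to $(-1)^{n-1}f_2^q$, so the ideal $(f_1^q,f_2^q)$ is indeed stable.
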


Ring presentations for $QK(\Fln)$ could be obtained from \cite{GM+23, Xu}. For our purpose of constructing ${\rm qch}$, we treat $\Fln$ as the special Milnor hypersurface $H_{n-1, n-1}$, and read off one from the general case $QK(H_{n-1, m-1})$  in \cref{quantum K ring presentation} directly; namely
\begin{prop}
The small quantum $K$-theory of $\Fln$ is canonically given by
\begin{align*}
     QK(\Fln) \cong \mathbb{Q}[L_1^{-1},L_2^{-1}]\dbb{Q_1,Q_2}/(F_1^{Q},F_2^{Q}),
\end{align*}
\begin{align*}
   \mbox{where } \,\,   F_1^{Q}(L_1^{-1},L_2^{-1},Q_1,Q_2)&=(1-L_2^{-1})^{n}-Q_2+Q_2(L_1^{-1}L_2^{-1}), \\
 F_2^{Q}(L_1^{-1},L_2^{-1},Q_1,Q_2)&=F_2(L_1^{-1},L_2^{-1})-Q_2 (L_1^{-1})^{n-1}-(-1)^{n-1}Q_1 L_2^{-1},
\end{align*}
 with 
 $F_2(x,y)=(-1)^{n-1}(1-x)^{n-1}+\sum\limits_{t=1}^{n-1}(-1)^{n-1-t}x^{t-1}(1-x)^{n-t-1}(1-y)^{t}.$ 
\end{prop}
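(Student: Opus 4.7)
The plan is to obtain this proposition as a direct specialization of \cref{thm: presentationQK} (proved later as \cref{quantum K ring presentation} in Section~5), which furnishes a ring presentation of the small quantum $K$-theory of the Milnor hypersurface $H_{n-1,m-1}$ for $n\geq m\geq 3$. Setting $m=n$ in that theorem should recover the statement here, since $\Fln = H_{n-1,n-1}$.

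First, I would make the identification $\Fln = H_{n-1,n-1}$ precise. By \cref{sub: Milnor}, there is a natural embedding $\iota\colon H_{n-1,m-1} \hookrightarrow \Fln$ realizing $H_{n-1,m-1}$ as a smooth Schubert variety; in the extremal case $m=n$, this embedding is the identity map, so $\iota^{*}(\mathcal{S}_1^\vee) = \mathcal{S}_1^\vee$ and $\iota^{*}(\underline{\bbC^n}/\mathcal{S}_{n-1}) = \underline{\bbC^n}/\mathcal{S}_{n-1}$. Consequently, the classes $L_1, L_2\in K(H_{n-1,n-1})$ appearing in \cref{thm: presentationQK} coincide with the line bundle classes of the same name on $\Fln$, and the Novikov variables $Q_1, Q_2$ transport correctly under this identification.

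Next, I would simply substitute $m=n$ into the formulas of $F_1^Q$ and $F_2^Q$ supplied by \cref{thm: presentationQK} and verify agreement with the statement here. For $F_1$, one has $F_1(x,y)=(1-y)^m$, giving $F_1(L_1^{-1},L_2^{-1}) = (1-L_2^{-1})^n$, while the quantum correction $-Q_2 + Q_2 L_1^{-1}L_2^{-1}$ is unchanged, producing exactly the $F_1^Q$ stated. For $F_2$, the correction $-(-1)^{n-m}Q_2(L_1^{-1})^{m-1}(1-L_1^{-1})^{n-m}$ collapses to $-Q_2 (L_1^{-1})^{n-1}$ since $(-1)^0 = 1$ and $(1-L_1^{-1})^0 = 1$, and the term $-(-1)^{n-1}Q_1 L_2^{-1}$ carries over verbatim.

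The hypothesis $n\geq m\geq 3$ becomes $n\geq 3$, which is exactly the non-degenerate range for $\Fln$ (the case $n=2$ degenerates to $\bbP^1$, already covered by \cref{thm: Pn}). There is essentially no technical obstacle in this proposition itself: it is a bookkeeping specialization, and the genuine work lies entirely in \cref{thm: presentationQK}, whose proof will go through the strategy advertised in the introduction: write down the $K$-theoretic $J$-function of $H_{n-1,m-1}$ via the quantum Lefschetz principle, extract finite-difference operators annihilating it, invoke \cref{quantum-K} to promote their symbols to relations in $QK(H_{n-1,m-1})$, and conclude sufficiency via the Nakayama-type argument of \cite[Appendix A]{GM+23}.
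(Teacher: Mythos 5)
Your proposal matches the paper's approach exactly: the paper states (immediately before the proposition) that it treats $\Fln$ as the special Milnor hypersurface $H_{n-1,n-1}$ and reads the presentation off from \cref{quantum K ring presentation} directly, with no further argument given. Your bookkeeping check of the $m=n$ specialization (including $(-1)^{n-m}=1$ and $(1-L_1^{-1})^{n-m}=1$) is exactly what is required.
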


We define a map
$QK(\Fln)\xrightarrow{\qch} QH(\Fln)$ as follows. Firstly, for $a=1,2$, we set
\[\qch(L_{a}^{-1})=e^{-h_a}\quad\textrm{ and }\quad\qch(Q_a)=q_a\left(\frac{1-e^{-h_a}}{h_a}\right)^{n}\ast\left(\frac{(h_1+h_2)}{1-e^{-(h_1+h_2)}}\right).\]
Secondly, for any $\alpha\in QK(\Fln)$, the above ring presentation implies that there exists $\Phi_{\alpha}(x_1,x_2,y_1,y_2)\in\mathbb{Q}[x_1,x_2]\dbb{y_1,y_2}$ such that $\alpha=\Phi_\alpha\left(L_1^{-1},L_2^{-1},Q_1,Q_2\right)$, and we set
\[
\qch(\alpha)=\Phi_\alpha\left(\qch(L_1^{-1}),\qch(L_2^{-1}),\qch(Q_1),\qch(Q_2)\right).
\]

\begin{thm}\label{prop--qchwelldefineFl}
    The map $\qch$ is independent of the choice of $\Phi_\alpha$'s and is a ring homomorphism. Moreover, its classical limit gives $\ch$.
\end{thm}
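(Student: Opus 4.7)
Because $\qch$ is defined on $QK(\Fln)$ by polynomial substitution into the generators $L_1^{-1},L_2^{-1},Q_1,Q_2$, both its well-definedness and the ring-homomorphism property reduce to the single check that the two defining relations vanish in $QH(\Fln)$ after substitution, i.e.\ $F_1^{Q}$ and $F_2^{Q}$ evaluated at $\qch(L_1^{-1}),\qch(L_2^{-1}),\qch(Q_1),\qch(Q_2)$ are both zero. The classical limit will then be automatic: reducing modulo $(q_1,q_2)$ sends $\qch(L_a^{-1})$ to $e^{-h_a}=\ch(L_a^{-1})$ and kills $\qch(Q_a)$, so the induced map $K(\Fln)\to H^{\rm ev}(\Fln)$ is $\ch$. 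To streamline the computations, I introduce the formal power series $\phi(u):=(1-e^{-u})/u$, so that $1-e^{-u}=u\,\phi(u)$ and $\qch(Q_a)=q_a\,\phi(h_a)^n\ast\phi(h_1+h_2)^{-1}$; the inverse makes sense since $\phi$ has constant term $1$.

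The verification of $F_1^Q$ follows the $\bbP^n$ argument of \cref{thm: Pn}. Expanding $(1-e^{-h_2})^n=h_2^n\ast\phi(h_2)^n$ and applying $h_2^n=q_2(h_1+h_2)$ from \cref{cor--h1h2QHrelationFl} gives $(1-e^{-h_2})^n=q_2(h_1+h_2)\ast\phi(h_2)^n$. On the other hand $\qch(Q_2)\ast(1-e^{-(h_1+h_2)})=q_2\phi(h_2)^n\ast\phi(h_1+h_2)^{-1}\ast(h_1+h_2)\ast\phi(h_1+h_2)=q_2(h_1+h_2)\ast\phi(h_2)^n$. These two expressions coincide, which (using $\qch(L_1^{-1}L_2^{-1})=e^{-(h_1+h_2)}$) is exactly $F_1^Q=0$ under $\qch$.

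For $F_2^Q$ the key new ingredient will be the polynomial identity
\[
F_2(x,y)\,(1-xy)=x^{n-1}(1-y)^n+(-1)^{n-1}y\,(1-x)^n \quad\text{in }\bbQ[x,y],
\]
which I plan to prove by rewriting $xF_2(x,y)=(-1)^{n-1}\bigl[x(1-x)^{n-1}+\sum_{t=1}^{n-1}(-x(1-y))^t(1-x)^{n-1-t}\bigr]$, completing the geometric series at $t=0$ and summing in closed form. Substituting $x=e^{-h_1}$, $y=e^{-h_2}$ and interpreting all products as quantum products in $QH(\Fln)$, then using $h_a^n=q_a(h_1+h_2)$ from \cref{cor--h1h2QHrelationFl} to rewrite $(1-e^{-h_a})^n=q_a(h_1+h_2)\ast\phi(h_a)^n$, the identity becomes
\[
F_2(e^{-h_1},e^{-h_2})\ast(h_1+h_2)\ast\phi(h_1+h_2)=(h_1+h_2)\ast\bigl[q_2\phi(h_2)^n\ast e^{-(n-1)h_1}+(-1)^{n-1}q_1\phi(h_1)^n\ast e^{-h_2}\bigr].
\]
Canceling $(h_1+h_2)$ and then multiplying by $\phi(h_1+h_2)^{-1}$ produces precisely
\[
F_2(e^{-h_1},e^{-h_2})=\qch(Q_2)\ast e^{-(n-1)h_1}+(-1)^{n-1}\qch(Q_1)\ast e^{-h_2},
\]
which is $F_2^Q=0$ under $\qch$.

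The step I expect to be the main obstacle is the cancellation of $(h_1+h_2)$ in the displayed equation above: it requires that $h_1+h_2$ is not a zero divisor in $QH(\Fln)$. This is precisely the content of \cref{lemma--h1+h2nonzerodivisorQHFl}, whose proof is deferred in the excerpt to the final section of the paper via mirror symmetry for $\Fln$. Granted that lemma, all three assertions of the theorem follow from the computations sketched above.
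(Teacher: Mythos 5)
Your proposal is correct and follows essentially the same route as the paper: multiply the image of $F_2^Q$ by $1-e^{-(h_1+h_2)}$ (equivalently by the unit $\phi(h_1+h_2)$ times $h_1+h_2$), reduce to the non--zero-divisor statement for $h_1+h_2$ (\cref{lemma--h1+h2nonzerodivisorQHFl}), rewrite $(1-e^{-h_a})^n$ via $h_a^n=q_a(h_1+h_2)$, and collapse the resulting expression by the geometric-series telescoping $(e^{-h_1}-1)-e^{-h_1}(1-e^{-h_2})=-(1-e^{-(h_1+h_2)})$. The one cosmetic difference is that you package the telescoping as the closed polynomial identity $F_2(x,y)(1-xy)=x^{n-1}(1-y)^n+(-1)^{n-1}y(1-x)^n$ (which I verified is correct) before substituting $x=e^{-h_1},y=e^{-h_2}$, whereas the paper performs the same telescoping directly in $QH(\Fln)$; and for $F_1^Q$ you compute both sides directly rather than invoking the non--zero-divisor lemma, which is a minor streamlining.
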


 To prove the theorem, we prepare  several lemmas first.  
Recall that in a commutative ring $R$, an element $x\in R$ is not a {zero-divisor} if and only if  for any $y\in R$, we have $y=0$ whenever  $xy=0$. We first assume the   next lemma, and leave its proof at the end of the appendix, which will involve  mirror symmetry for $\Fln$.
 
\begin{lemma}\label{lemma--h1+h2nonzerodivisorQHFl}
    The element $h_1+h_2$ is  not a zero-divisor in $QH(\Fln)$.
\end{lemma}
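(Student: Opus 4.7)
The plan is to show that $h_1+h_2$ becomes a unit after base change from the Novikov ring $A:=\bbQ\dbb{q_1,q_2}$ to its fraction field $K:=\bbQ((q_1,q_2))$. Since $R:=QH(\Fln)$ is a free $A$-module of rank $N=n(n-1)$ (quantum cohomology is a flat deformation whose classical specialization $q_1=q_2=0$ of the presentation in \cref{prop: QHFl} recovers $H^{*}(\Fln)$ of this rank), the natural map $R\hookrightarrow R_K:=R\otimes_A K$ is injective, and multiplication by $h_1+h_2$ is injective on $R$ iff it is injective on $R_K$. As $R_K$ is a finite-dimensional, hence Artinian, $K$-algebra, an element there is a non-zero-divisor iff it is a unit, iff its image is nonzero in every residue field of $\mathrm{Spec}(R_K)$, equivalently nonzero at every $\overline{K}$-point of $\mathrm{Spec}(R_K)$.

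It thus suffices to check that for every pair $(a,b)\in\overline{K}^2$ with $f_1^q(a,b,q_1,q_2)=0$ and $f_2^q(a,b,q_1,q_2)=0$, one has $a+b\neq 0$. Suppose for contradiction that $a+b=0$. The first relation then gives $b^n=q_2(a+b)=0$, forcing $b=0$ in the field $\overline{K}$, and hence $a=-b=0$. Substituting $(a,b)=(0,0)$ into the second relation yields $-q_1-(-1)^{n-1}q_2=0$ in $\overline{K}$, which is a nontrivial $\bbQ$-linear relation between $q_1$ and $q_2$; this contradicts the algebraic independence of $q_1,q_2$ over $\bbQ$ inside $K\subset\overline{K}$.

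The argument is essentially mechanical once the ring presentation of \cref{prop: QHFl} and the freeness of $R$ over $A$ are available; the two relations interact in a very constrained way near $h_1+h_2=0$, collapsing any hypothetical vanishing point to $(a,b)=(0,0)$ and then producing an impossible linear relation in $q_1,q_2$. The only conceptual input is the $A$-freeness of rank $N$, a standard fact for quantum cohomology of smooth projective varieties; the main obstacle, if one wishes to avoid invoking this as a black box, is to verify it directly from the presentation, which can be done by a Gröbner-style argument using the $h_2$-monic relation $f_1^q$. As an alternative conceptual route consistent with the appendix, mirror symmetry for $\Fln$ identifies $R_K$ with a semisimple Jacobi ring of a Laurent polynomial superpotential, and the non-vanishing of $h_1+h_2$ at each critical point reduces to precisely the same two-relation computation carried out above.
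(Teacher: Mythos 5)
Your argument is correct and takes a genuinely different route from the paper's. The paper proves \cref{lemma--h1+h2nonzerodivisorQHFl} by first establishing \cref{MS}: an injective $\mathbb{Q}\dbb{q_1,q_2}$-algebra homomorphism $\Phi$ from $QH(\Fln)$ into a (completed) Jacobi ring of the toric superpotential mirror $f_{\rm tor}$ from \cite{BCFKS}, sending $h_1\mapsto q_1q_2/x_{2n-3}$. Since $\Phi(h_1)$ and $q_1$ are Laurent monomials, hence units in the Jacobi ring, the relation $h_1^n=q_1(h_1+h_2)$ (\cref{cor--h1h2QHrelationFl}) forces $\Phi(h_1+h_2)=q_1^{-1}\Phi(h_1)^n$ to be a unit, and injectivity of $\Phi$ then gives the non-zero-divisor property. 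Your approach instead passes to the generic fiber $R_K=R\otimes_A K$ over the fraction field $K$ of the Novikov ring, uses that $R_K$ is Artinian to reduce to checking that $h_1+h_2$ does not vanish at any $\overline K$-point of $\mathrm{Spec}(R_K)$, and then reads this off directly from the relations $f_1^q,f_2^q$ of \cref{prop: QHFl}: the constraint $a+b=0$ collapses any hypothetical zero to $(a,b)=(0,0)$ via $f_1^q$, and $f_2^q(0,0)=-q_1-(-1)^{n-1}q_2\neq 0$ in $K$. Both proofs ultimately feed on the presentation of \cref{prop: QHFl}, but they buy different things: yours is more elementary, self-contained, and avoids any input from mirror symmetry beyond the presentation itself (at the cost of invoking freeness of $QH$ over the Novikov ring, which is standard, and implicitly the surjectivity of $K[h_1,h_2]\to R_K$, which follows from $h_1,h_2$ generating $H^*(\Fln)$ plus Nakayama); the paper's longer route via \cref{MS} simultaneously establishes the mirror-symmetry embedding, which is of independent interest and is what the whole appendix is built around. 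One small expository point: only one direction of your ``iff'' between injectivity of multiplication on $R$ and on $R_K$ is used, and it may be cleaner to phrase the reduction as the one-way implication (non-zero-divisor in $R_K$ implies non-zero-divisor in $R$, since $R\hookrightarrow R_K$). Your final digression about re-deriving freeness from the presentation by Gröbner bases is unnecessary — freeness of $QH(X)$ over $\mathbb{Q}\dbb{\mathbf q}$ is built into its definition as $H^{\rm ev}(X)\otimes\mathbb{Q}\dbb{\mathbf q}$.
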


\begin{lemma}\label{lemma--h1andh1+h2arenonzerodivisorFl}
    The element $1-e^{-(h_1+h_2)}$ is not a zero-divisor in $QH(\Fln)$.
\end{lemma}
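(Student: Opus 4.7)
The plan is to factor $1-e^{-(h_1+h_2)}$ as the quantum product of $h_1+h_2$ with a unit in $QH(\Fln)$, and then invoke Lemma \ref{lemma--h1+h2nonzerodivisorQHFl}. Using the identity $1-e^{-x}=x\cdot\frac{1-e^{-x}}{x}$ in $\mathbb{Q}\dbb{x}$ together with the convention of Section 2.2 for substituting cohomology classes into functions holomorphic at $0$, we obtain
\[
1-e^{-(h_1+h_2)} \;=\; (h_1+h_2)\ast\phi(h_1+h_2), \qquad \phi(x):=\frac{1-e^{-x}}{x}=\sum_{k\geq 0}\frac{(-x)^k}{(k+1)!},
\]
in $QH(\Fln)$.

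The next step is to show that $\phi(h_1+h_2)$ is a unit in $QH(\Fln)$. Observe that $QH(\Fln)=\mathbb{Q}[h_1,h_2]\dbb{q_1,q_2}/(f_1^q,f_2^q)$ is complete in the $(q_1,q_2)$-adic topology, and its reduction modulo $(q_1,q_2)$ is the classical cohomology ring $H^{\rm ev}(\Fln)$, which is a finite-dimensional local $\mathbb{Q}$-algebra with nilpotent maximal ideal $(h_1,h_2)$. By the standard lifting argument for units under complete local quotients, $QH(\Fln)$ itself is a local ring whose maximal ideal is the preimage of $(h_1,h_2)$; equivalently, an element of $QH(\Fln)$ is a unit if and only if its image in $\mathbb{Q}=QH(\Fln)/(h_1,h_2,q_1,q_2)$ is nonzero. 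Since $\phi(0)=1$, the element $\phi(h_1+h_2)$ has image $1$ under this reduction, and hence is a unit.

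Combining the two steps: because $h_1+h_2$ is not a zero-divisor by Lemma \ref{lemma--h1+h2nonzerodivisorQHFl}, and $\phi(h_1+h_2)$ is a unit, their quantum product $1-e^{-(h_1+h_2)}$ is also not a zero-divisor (if $\bigl((h_1+h_2)\ast\phi(h_1+h_2)\bigr)\ast y=0$, multiplying by $\phi(h_1+h_2)^{-1}$ yields $(h_1+h_2)\ast y=0$, hence $y=0$). The only mildly nontrivial point is establishing that $\phi(h_1+h_2)$ is a unit; this rests on recognizing $QH(\Fln)$ as a local ring, which is a formal consequence of the $(q_1,q_2)$-adic completeness together with the fact that $H^{\rm ev}(\Fln)$ is a local Artinian ring with nilpotent augmentation ideal.
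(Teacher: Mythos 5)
Your proof is correct and rests on the same key idea as the paper's: relate $1-e^{-(h_1+h_2)}$ to $h_1+h_2$ by substituting $h_1+h_2$ into a power series with nonzero constant term, then invoke \cref{lemma--h1+h2nonzerodivisorQHFl}. The only real difference is the direction of the factorization. The paper writes $h_1+h_2=\bigl(1-e^{-(h_1+h_2)}\bigr)\ast\frac{h_1+h_2}{1-e^{-(h_1+h_2)}}$, which immediately gives the conclusion: any $y$ with $\bigl(1-e^{-(h_1+h_2)}\bigr)\ast y=0$ satisfies $(h_1+h_2)\ast y=0$, hence $y=0$, with no discussion of units at all. You instead write $1-e^{-(h_1+h_2)}=(h_1+h_2)\ast\phi(h_1+h_2)$ and then must argue that $\phi(h_1+h_2)$ is a unit, which you do via $(q_1,q_2)$-adic completeness and locality of $QH(\Fln)$. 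That argument is valid, but heavier than necessary: since $\phi(x)\cdot\frac{x}{1-e^{-x}}=1$ in $\mathbb{Q}\dbb{x}$ and both $\phi(h_1+h_2)$ and $\frac{h_1+h_2}{1-e^{-(h_1+h_2)}}$ are well-defined elements of $QH(\Fln)$ by the convention in \cref{subsec: QH}, the explicit inverse is already at hand, and the locality of $QH(\Fln)$ need not be invoked.
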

\begin{proof}
     The equality $(1-e^{-x})\cdot\frac{x}{1-e^{-x}}=x$ in $\mathbb{Q}\dbb{x}$ implies that $\left(1-e^{-(h_1+h_2)}\right)\ast\frac{(h_1+h_2)}{1-e^{-(h_1+h_2)}}=h_1+h_2$. So it follows from \cref{lemma--h1+h2nonzerodivisorQHFl} that $1-e^{-(h_1+h_2)}$ is not a zero-divisor.
\end{proof}

\begin{lemma}\label{lemma--hasimplifyqchQaQHFl}
 For $a\in\{1,2\}$, we have $(1-e^{-(h_1+h_2)})\ast\qch(Q_a)=(1-e^{-h_a})^{n}$ in $QH(\Fln).$ 
\end{lemma}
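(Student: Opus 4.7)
The plan is to verify the identity by a direct calculation: substitute the definition of $\qch(Q_a)$ into the left-hand side, use the fact that multiplication of holomorphic series at $h_1+h_2$ is compatible with the quantum product (as recalled in \cref{subsec: QH}), and then invoke the quantum relation $h_a^n = q_a(h_1+h_2)$ from \cref{cor--h1h2QHrelationFl}.

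More concretely, I would first expand
\[
(1-e^{-(h_1+h_2)}) \ast \qch(Q_a) = q_a \left(\frac{1-e^{-h_a}}{h_a}\right)^{\ast n} \ast \Bigl[(1-e^{-(h_1+h_2)}) \ast \frac{h_1+h_2}{1-e^{-(h_1+h_2)}}\Bigr].
\]
The bracketed factor collapses to $h_1+h_2$: indeed, the scalar identity $(1-e^{-x})\cdot \frac{x}{1-e^{-x}} = x$ in $\mathbb{Q}\dbb{x}$ lifts to an equality in $QH(\Fln)$ after evaluating at $h_1+h_2$, since the Taylor coefficients of a product of two series holomorphic at the origin can be assembled as a Cauchy product, which matches the quantum product of their evaluations (this is what allows the convention $f(\alpha)\in QH(X)$ of \cref{subsec: QH} to behave as an algebra homomorphism from convergent power series). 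Next, applying the same principle one gets $(1-e^{-h_a})^{\ast n} = h_a^n \ast \left(\tfrac{1-e^{-h_a}}{h_a}\right)^{\ast n}$, so it remains to compare
\[
q_a (h_1+h_2) \ast \left(\tfrac{1-e^{-h_a}}{h_a}\right)^{\ast n} \quad \text{and} \quad h_a^n \ast \left(\tfrac{1-e^{-h_a}}{h_a}\right)^{\ast n}.
\]
These agree precisely because of \cref{cor--h1h2QHrelationFl}, which gives $h_a^n = q_a(h_1+h_2)$ in $QH(\Fln)$.

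I do not expect a real obstacle here; the statement is essentially a bookkeeping identity once one is comfortable with evaluating power series at elements of $QH(\Fln)$. The only point to be mildly careful about is the compatibility of convergent-series evaluation with the quantum product, which is precisely the content of the second item of the convention in \cref{subsec: QH} (well-definedness in $QH(X)$ follows from the dimension axiom ensuring finiteness of each $q^\beta$-coefficient). Note that \cref{lemma--h1+h2nonzerodivisorQHFl} and \cref{lemma--h1andh1+h2arenonzerodivisorFl} are not needed for the present lemma: they will enter later when one wants to upgrade this identity to a characterization of $\qch(Q_a)$ (in analogy with \cref{rmk:Pnunique}) or to verify that the images of the $F_i^Q$ vanish in $QH(\Fln)$.
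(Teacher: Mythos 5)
Your proof is correct and follows essentially the same route as the paper: collapse $(1-e^{-(h_1+h_2)})\ast\frac{h_1+h_2}{1-e^{-(h_1+h_2)}}$ to $h_1+h_2$ via the scalar identity, rewrite $(1-e^{-h_a})^{\ast n}$ as $h_a^n\ast(\frac{1-e^{-h_a}}{h_a})^{\ast n}$, and finish with \cref{cor--h1h2QHrelationFl}. Your remark that the zero-divisor lemmas are not needed here is also accurate.
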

\begin{proof}
   By the equalities 
    \(
    x\cdot\frac{1-e^{-x}}{x}=1-e^{-x}\textrm{ and }(1-e^{-x})\cdot\frac{x}{1-e^{-x}}=x
    \)
    in $\mathbb{Q}\dbb{x}$, we have
    \[
    h_a\ast\frac{1-e^{-h_a}}{h_a}=1-e^{-h_a}\textrm{ and }(1-e^{-(h_1+h_2)})\ast\frac{h_1+h_2}{1-e^{-(h_1+h_2)}}=h_1+h_2.
    \]
    So we see that
    \begin{align*}
        (1-e^{-(h_1+h_2)})\ast\qch(Q_a)&=q_a\left(\frac{1-e^{-h_a}}{h_a}\right)^{n}\ast\left((1-e^{-(h_1+h_2)})\ast\frac{h_1+h_2}{1-e^{-(h_1+h_2)}}\right)\\
        &=q_a\left(\frac{1-e^{-h_a}}{h_a}\right)^{n}\ast(h_1+h_2)\\
&=h_a^{n}\ast\left(\frac{1-e^{-h_a}}{h_a}\right)^{n}\qquad(\text{from }\cref{cor--h1h2QHrelationFl})\\
&=(1-e^{-h_a})^n.
    \end{align*}
\end{proof}
\bigskip

\begin{proof}[Proof of \cref{prop--qchwelldefineFl}]
To prove the first statement, by the ring presentation in \cref{prop: QHFl}, it suffices to show that the two elements 
\[
{F}_a^Q\left(\qch(L_1^{-1}),\qch(L_2^{-1}),\qch(Q_1),\qch(Q_2)\right)\in QH(\Fln),\quad a=1,2,
\]
are vanishing.

For ${F}_1^Q$, by \cref{lemma--h1andh1+h2arenonzerodivisorFl}, we only need to prove that 
    \[
(1-e^{-(h_1+h_2)})\ast\left((1-e^{-h_2})^{n}-\qch(Q_2)+\qch(Q_2)\ast e^{-h_1}\ast e^{-h_2}\right)=0.
    \]
    It follows from \cref{lemma--hasimplifyqchQaQHFl} that
    \begin{align*}
        &(1-e^{-(h_1+h_2)})\ast\left((1-e^{-h_2})^{ n}-\qch(Q_2)+\qch(Q_2)\ast e^{-h_1}\ast e^{-h_2}\right)\\
        ={}&(1-e^{-h_2})^{ n}\ast\left((1-e^{-(h_1+h_2)})-1+e^{-h_1}\ast e^{-h_2}\right).
    \end{align*}
    So the required vanishing result follows from $e^{-(h_1+h_2)}=e^{-h_1}\ast e^{-h_2}$.

    For ${F}_2^Q$, by \cref{lemma--h1andh1+h2arenonzerodivisorFl}, we only need to prove that $\mathrm{LHS}=\mathrm{RHS}$, where
    \begin{align*}
\mathrm{LHS}&:=(1-e^{-(h_1+h_2)})\ast F_2(e^{-h_1},e^{-h_2}),\\
    \mathrm{RHS}&:=(1-e^{-(h_1+h_2)})\ast\left(\qch(Q_2)\ast(e^{-h_1})^{\ast(n-1)}+(-1)^{n-1}\qch(Q_1)\ast e^{-h_2}\right).
    \end{align*}
    We use \cref{lemma--hasimplifyqchQaQHFl} to find that 
    \begin{align*}
        \mathrm{RHS}&=(1-e^{-h_2})^{n}\ast(e^{-h_1})^{n-1}+(-1)^{n-1}(1-e^{-h_1})^{n}\ast e^{-h_2}.
    \end{align*}
    For $\mathrm{LHS}$, observe that $F_2(e^{-h_1},e^{-h_2})$ is equal to
    \begin{align*}
        (-1)^{n-1}(1-e^{-h_1})^{n-1}+(1-e^{-h_2})\ast\sum_{l=0}^{n-2}(e^{-h_1}-1)^{n-2-l}\ast\left(e^{-h_1}\ast(1-e^{-h_2})\right)^{l}.
    \end{align*}
    Note that $(e^{-h_1}-1)-\left(e^{-h_1}\ast(1-e^{-h_2})\right)=-(1-e^{-(h_1+h_2)})$, and we find that 
    \begin{align*}
       \mathrm{LHS}={}&(-1)^{n-1}(1-e^{-h_1})^{n-1}\ast(1-e^{-(h_1+h_2)})\\
       &\quad-(1-e^{-h_2})\ast\left((e^{-h_1}-1)^{\ast(n-1)}-(e^{-h_1}\ast(1-e^{-h_2}))^{n-1}\right).
    \end{align*}
    Now one can check that $\mathrm{LHS}=\mathrm{RHS}$, and this proves the first statement. 

    The second statement     directly follows from the definition of $\qch$. This finishes the proof.
\end{proof}

\begin{remark}
  If we assume a prior that  $\qch$ is a ring homomorphism, then the images $\qch(Q_a)$'s are   uniquely determined by $\qch(L_a^{-1})=e^{-h_a}$,  similar to \cref{rmk:Pnunique}.
\end{remark}

\begin{remark}
 Let $\pi_1,\pi_2$ be  the natural projection   from $\Fln$ to $\mathbb{F}\ell_{1,n}, \mathbb{F}\ell_{n-1;n}$, respectively. We have the exact sequences
  $$0\to\mathcal{O}_{\mathbb{F}\ell_{1,n-1;n}}\to L_1^{\oplus n}\to \pi_1^{*}T_{\mathbb{F}\ell_{1,n}}\to 0, 
  \qquad 0\to\mathcal{O}_{\mathbb{F}\ell_{1,n-1;n}}\to L_2^{\oplus n}\to \pi_2^{*}T_{\mathbb{F}\ell_{n-1;n}}\to 0.$$
Similar to \cref{rmk: explainPn}, the factor $\left(\frac{1-e^{-h_a}}{h_a}\right)^{n} $  in $\qch(Q_a)$, $1\leq a\leq 2$, may be interpreted as a quantum version of the inverse of the Todd classes of $\pi_1^{*}T_{\mathbb{F}\ell_{1;n}}$ and $\pi_2^{*}T_{\mathbb{F}\ell_{n-1;n}}$ respectively.
  The factor    $\frac{(h_1+h_2)}{1-e^{-(h_1+h_2)}}$ may be interpreted as a quantum version of the   Todd class  of $L_1\otimes L_2$.
     
\end{remark}

 \section{Quantum $K$-theory of Milnor hypersurfaces}
In the section, we provide a ring presentation of the small quantum $K$-theory of smooth Milnor hypersurfaces.

\subsection{Milnor hypersurfaces} \label{sub: Milnor}
Let $n\geq m\ge 3$ \footnote{We exclude the case $m=2$, for which there is a nontrivial mirror map in the $K$-theoretic $J$-function by quantum Lefschetz principle and \cref{quantum-K} is not applicable directly.}. The (smooth) Milnor hypersurface $H_{n-1,m-1}$ is a   degree-$(1,1)$ hypersurface in $\mathbb{P}^{n-1} \times \mathbb{P}^{m-1}$, defined by the equation $$x_1y_1+x_2y_2+\cdots+x_{m}y_{m}=0, $$
where $[x_1:   \cdots : x_{n}]$ and $[y_1: \cdots: y_{m}]$ are homogeneous coordinates of $\mathbb{P}^{n-1}, \mathbb{P}^{m-1}$, respectively. 
Note that the incidence variety  $\Fln$ is   a degree $(1, 1)$-hypersurface of 
 $\mathbb{P}^{n-1}\times \mathbb{P}^{n-1}$ defined by 
  $\sum_{j=1}^n(-1)^j x_jy_j=0$ via the    Pl$\ddot{u}$cker embedding.
 We can identify  $H_{n-1, n-1}$ with $\Fln$ by a simple coordinate change $(y_j\mapsto (-1)^jy_j)$.

\begin{remark}
    Any smooth Schubert variety in $\Fln$ is either a Milnor hypersurface or a product of projective spaces. 
\end{remark} 
 
Consider  the following commutative diagram:
\begin{equation} \label{commutative}
    \centering
   \xymatrix{
    & H_{n-1,m-1}   \ar@{^(->}[r]^{ \iota \qquad \quad }    \ar[d]^{\pi^{m}_2}  
    &   H_{n-1,n-1}\cong \Fln  \ar[d]^{\pi_{2}}  \\
    &  \mathbb{P}^{m-1}  \ar@{^(->}[r]^{\iota_m \qquad}   & \mathbb{P}^{n-1}\cong \mathbb{F}\ell_{n-1;n} }
\end{equation}
Here $\iota$ is the natural embedding, $\pi_2$ (resp. $\pi_2^{m}$) is the natural projection  to the second factor of the product of projective spaces, and $\iota_m$ denotes the natural inclusion.  
Let  $\mathcal{U}_{n-1}$ be the tautological bundle of $\mathbb{F}\ell_{n-1;n}$. 
   Then $\Fln=\mathbb{P}(\mathcal{U}_{n-1})$ is a $\mathbb{P}^{n-2}$-bundle over $\mathbb{P}^{n-1}$, and 
      $H_{n-1,m-1}=  \mathbb{P}^{m-1} \times_{\mathbb{F}\ell_{n-1;n}} \Fln= \mathbb{P}(\iota_{m}^{*}\mathcal{U}_{n-1})$ is a $\mathbb{P}^{n-2}$-bundle over $\mathbb{P}^{m-1}$.

\subsection{Ring presentation of  $K(H_{n-1,m-1})$  }
There is a standard procedure to  obtain a ring presentation of the classical $K$-theory of the projective bundle $H_{n-1,m-1}= \mathbb{P}(\iota_{m}^{*}\mathcal{U}_{n-1})$. Here we provide a precise presentation of  $K(H_{n-1,m-1})$ that is helpful for us to derive a ring presentation of 
 $QK(H_{n-1,m-1})$.

\begin{lemma}  \label{lemma-K-relation}
 For any nonnegative  integers $b, n, t$ with $0\leq b\leq n-t-1$, we have  
\begin{align*}
    \sum_{c=0}^{b}\binom{n}{n-b+c}\binom{t+c}{c}(-1)^{c}=\binom{n-t-1}{b}.
\end{align*}
\end{lemma}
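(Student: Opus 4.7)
The plan is to deduce the identity from the elementary factorization
$$(1-x)^{n-t-1}=(1-x)^{n}\cdot(1-x)^{-(t+1)}$$
in $\bbQ\dbb{x}$, by comparing the coefficient of $x^{b}$ on the two sides.

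First I would record the standard expansions
$$(1-x)^{n-t-1}=\sum_{b\geq 0}\binom{n-t-1}{b}(-1)^{b}x^{b},\qquad (1-x)^{n}=\sum_{k\geq 0}\binom{n}{k}(-1)^{k}x^{k},$$
along with $(1-x)^{-(t+1)}=\sum_{c\geq 0}\binom{t+c}{c}x^{c}$. Here the hypothesis $0\leq b\leq n-t-1$ forces $n-t-1\geq 0$, so $\binom{n-t-1}{b}$ in the first expansion is precisely the usual binomial coefficient appearing in the statement of the lemma.

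Next I would form the Cauchy product of the latter two series and extract the coefficient of $x^{b}$; equating it with the coefficient of $x^{b}$ on the left side yields
$$\sum_{c=0}^{b}\binom{n}{b-c}(-1)^{b-c}\binom{t+c}{c}=(-1)^{b}\binom{n-t-1}{b}.$$
Cancelling the overall factor $(-1)^{b}$ (using $(-1)^{-c}=(-1)^{c}$) and applying the symmetry $\binom{n}{b-c}=\binom{n}{n-b+c}$ then converts this directly into the identity claimed in the lemma.

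There is essentially no obstacle here: the whole argument is a one-line generating-function manipulation, and the only care required is in tracking signs and in rewriting the lower index of $\binom{n}{b-c}$ via $\binom{n}{k}=\binom{n}{n-k}$. As a sanity check, both sides vanish when $b>n-t-1$, so the hypothesis $b\leq n-t-1$ is used only to ensure that $\binom{n-t-1}{b}$ is nonzero. (Alternatively one could argue by induction on $b$ using Pascal's rule, but the generating-function route is cleaner.)
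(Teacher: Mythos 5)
Your proof is correct and takes essentially the same generating-function approach as the paper: the paper writes $(1+z)^{n-t-1}=(1+z)^{n}\cdot A(-z)$ with $A(z)=\sum_{c}\binom{t+c}{c}z^{c}=(1-z)^{-(t+1)}$ and reads off the coefficient of $z^{b}$, which is your computation after the substitution $z\mapsto -x$.
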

 \begin{proof}
It follows from  differentiating   $\sum_{c=0}^{\infty}z^{c}=\frac{1}{1-z}$, where $(|z|<1)$, that 
 $$A(z):=\sum_{c=0}^{\infty}\binom{t+c}{c}z^{c}=\frac{1}{(1-z)^{t+1}}. $$   The coefficient of $z^{b}$-term in $(1+z)^{n}\cdot A(-z)$ is given by  $\sum_{c=0}^{b}\binom{n}{n-b+c}\binom{t+c}{c}(-1)^{c}$. Hence, it is equal to   $\binom{n-t-1}{b}$, by noting  $(1+z)^{n-t-1}=(1+z)^{n}\cdot A(-z)$.
 \end{proof}
Recall the following polynomials defined in the introduction.  
 \begin{align}
      F_1(x,y)&:=(1-y)^{m},\\ 
      F_2(x,y)&:=(-1)^{n-1}(1-x)^{n-1}+\sum_{t=1}^{m-1}(-1)^{n-1-t}x^{t-1}(1-x)^{n-t-1}(1-y)^{t}.\label{expF2}
  \end{align} 
  
\begin{lemma} \label{clasK}
There exists $a(x, y)\in \bbQ[x, y]$ such that 
       $$F_2(x,y)-a(x, y)F_1(x, y)=(-1)^{n}\sum_{l=0}^{n-1} (-x)^{l} \sum_{s=1}^{l+1}\binom{n}{n-1-l+s} (-y)^{s}.$$
 \end{lemma}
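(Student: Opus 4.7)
The plan is to establish the congruence $F_2(x,y) \equiv R(x,y) \pmod{(1-y)^m}$, where $R(x,y)$ denotes the prescribed right-hand side; the polynomial $a(x,y)\in\bbQ[x,y]$ of the statement is then recovered as the quotient of $F_2 - R$ by $F_1 = (1-y)^m$. To establish the congruence I substitute $v = 1-y$ and verify that $F_2(x,1-v)$ and $R(x,1-v)$ agree modulo $v^m$, by matching the coefficient of each monomial $v^t(-x)^l$ for $0\le t\le m-1$ and $0\le l\le n-1$.

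The first observation is that $F_2(x,1-v)$ is already a polynomial in $v$ of degree at most $m-1$, so it is unchanged modulo $v^m$ and its coefficients are read off directly from \eqref{expF2}. On the $R$-side, expanding $(-y)^s = (v-1)^s$ via the binomial theorem and extracting the coefficient of $v^t(-x)^l$ produces a double sum. After the reindexing $c=s-t$ and $b=l-t+1$, this sum takes exactly the form $\sum_{c=0}^{b}\binom{n}{n-b+c}\binom{t+c}{c}(-1)^c$, so that \cref{lemma-K-relation} identifies it with $\binom{n-t-1}{b}$, which is precisely the $(v^t,(-x)^l)$-coefficient of $F_2(x,1-v)$.

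The main wrinkle is the boundary case $l=n-1$ with $t\ge 1$, where $b=n-t$ lies outside the hypothesis $0\le b\le n-t-1$ of \cref{lemma-K-relation}. Here the $F_2$-coefficient vanishes, so I must independently show that the $R$-side sum also vanishes; this is a short calculation using the standard identity $\binom{n}{t+c}\binom{t+c}{c}=\binom{n}{t}\binom{n-t}{c}$, which converts the sum into $\binom{n}{t}(1-1)^{n-t}$, vanishing because $t\le m-1<n$ (by the hypothesis $n\ge m$). The case $t=0$ is handled in the same spirit by applying \cref{lemma-K-relation} directly with $t=0$. Once all coefficients of $v^t$ for $0\le t\le m-1$ are matched, we conclude that $(1-y)^m$ divides $F_2-R$ in $\bbQ[x,y]$, giving the required $a(x,y)$.
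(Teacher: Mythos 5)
Your proof is correct and is essentially the same calculation as the paper's: expand the right-hand side in powers of $v=1-y$, apply \cref{lemma-K-relation}, and conclude $(1-y)^m\mid F_2-R$. The difference worth recording is in the handling of the boundary $l=n-1$. The paper extracts that summand at the outset, noting that it equals $-x^{n-1}(1-y)^{n-m}F_1(x,y)+x^{n-1}$; the remaining sum over $l\le n-2$ then keeps $b=l-t+1$ strictly inside the lemma's range, and the stray $x^{n-1}$ supplies the top coefficient of $(-1)^{n-1}(1-x)^{n-1}$. You keep $l=n-1$ and instead show the corresponding $R$-coefficients vanish for $t\ge 1$ via $\binom{n}{t+c}\binom{t+c}{c}=\binom{n}{t}\binom{n-t}{c}$ and the collapse $(1-1)^{n-t}=0$; that is a valid alternative, and arguably more transparent since it stays within the coefficient-matching framework rather than requiring one to guess the factor $a_1F_1$.

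One small imprecision: your uniform reindexing $c=s-t$, $b=l-t+1$ does not quite cover $t=0$, because the inner sum over $s$ starts at $1$ rather than at $t=0$; and for $(l,t)=(n-1,0)$ the resulting $b=n$ lies outside \cref{lemma-K-relation}'s range while the $F_2$-coefficient $(-1)^{n-1}$ is nonzero, so the vanishing argument you use for $t\ge1$ does not apply either. The correct shift for $t=0$, as in the paper, is $c=s-1$, $b=l$, which keeps $b\le n-1$ throughout (including $l=n-1$) and makes the lemma applicable. The phrase ``applying \cref{lemma-K-relation} directly with $t=0$'' should be expanded to make this explicit; once it is, the proof is complete.
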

\begin{proof}
 Let RHS denote the right-hand side and  $a_1(x, y):= -x^{n-1}(1-y)^{n-m}$. By direct calculation, we have 
  $\mbox{RHS}=a_1(x, y)F_1(x, y)+x^{n-1}+(-1)^{n}\sum_{l=0}^{n-2} (-x)^{l} \sum_{s=1}^{l+1}\binom{n}{n-1-l+s} (-y)^{s}$.
  Note $ (-y)^s=  \sum_{t=1}^{s}(1-y)^{t}\binom{s}{t} (-1)^{s-t}+(-1)^{s}. $ By changing the order of summation over $t$ and $s,l$,  we have 
     $\mbox{RHS}=a_1(x, y)F_1(x, y)+g_0(x)+ \sum_{t=1}^{n-1} g_{t}(x) (1-y)^{t}, $
    where $$g_0(x)=x^{n-1}+(-1)^{n}\sum_{l=0}^{n-2}(-x)^{l} \sum_{s=1}^{l+1}\binom{n}{n-1-l+s}(-1)^{s},$$ $$g_{t}(x)=(-1)^{n}\sum_{l=t-1}^{n-2}(-x)^{l} \sum_{s=t}^{l+1} \binom{n}{n-1-l+s} \binom{s}{t} (-1)^{s-t}. $$
   By Lemma $\ref{lemma-K-relation}$ with $t=0$, we have 
   $$g_{0}(x)=x^{n-1}+(-1)^{n}\sum_{l=0}^{n-2}(-x)^{l} (-1)\sum_{c=0}^{l}\binom{n}{n-l+c}(-1)^{c}=x^{n-1}+(-1)^{n}\sum_{l=0}^{n-2}(-x)^{l} (-1) \binom{n-1}{l}.$$
   Hence, $g_0(x)=(-1)^{n-1}(1-x)^{n-1}$.
   For $1\leq t\leq m-1$,  by substituting   $l$ with $b=l+1-t$ and then applying  Lemma \ref{lemma-K-relation} with $c=s-t$, we have
    $$g_{t}(x)=(-1)^{n}\sum_{b=0}^{n-t-1}(-x)^{b+t-1} \sum_{s=t}^{b+t} \binom{n}{n-b-t+s}\binom{s}{t} (-1)^{s-t}=(-1)^{n}\sum_{b=0}^{n-t-1}\binom{n-t-1}{b}(-x)^{b+t-1}.$$
 Hence, $g_t(x)=(-1)^{n-1-t}x^{t-1}(1-x)^{n-t-1}$. 
 
 Set $a(x, y)=a_1(x, y)+ \sum_{t=m}^{n-1}g_t(x)(1-y)^{t-m}$. We are done. 
\end{proof}
Recall that  $\mathcal{S}_1$ 
(resp. $\mathcal{S}_{n-1}$) denotes the tautological vector subbundle  of $\Fln $ whose fiber at $V_\bullet$ is given by the vector space $V_1$ (resp. $V_{n-1}$). Let $P_1$ (resp. $P_2$) be the class of  line  bundle  $\iota^{*}\mathcal{S}_1$ (resp. $\iota^{*}(\underline{\mathbb{C}^{n}}/\mathcal{S}_{n-1})^{\vee})$  in $K(H_{n-1,m-1})$, namely $P_i=L_i^{-1}$ in \cref{mainthmFln}.
\begin{prop}  \label{Classical K}
The classical $K$-theory of $H_{n-1,m-1}$ is generated by $P_1, P_2$ with  
      $$K(H_{n-1,m-1})=\mathbb{Q}[P_1,P_2]/(F_1(P_1,P_2),F_2(P_1,P_2)).$$

\end{prop}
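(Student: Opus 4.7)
The plan is to exploit the projective bundle structure from diagram (5.1): writing $E := \iota_m^*\mathcal{U}_{n-1}$, we have $H_{n-1,m-1} = \mathbb{P}(E)$ as a $\mathbb{P}^{n-2}$-bundle over $\mathbb{P}^{m-1}$, and the tautological line subbundle $\mathcal{O}_{\mathbb{P}(E)}(-1) \hookrightarrow \pi^*E$ is precisely $\iota^*\mathcal{S}_1$, so $P_1 = [\mathcal{O}_{\mathbb{P}(E)}(-1)]$. The standard projective bundle formula in $K$-theory then gives
\[
K(H_{n-1,m-1}) \cong K(\mathbb{P}^{m-1})[P_1]\big/\Lambda, \qquad \Lambda := \lambda_{-P_1}(E^\vee),
\]
where $\lambda_t(F) := \sum_{i\geq 0}t^i[\wedge^i F]$. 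Combined with $K(\mathbb{P}^{m-1}) = \mathbb{Q}[P_2]/F_1$, we obtain $K(H_{n-1,m-1}) \cong \mathbb{Q}[P_1,P_2]/(F_1,\Lambda)$, so it suffices to show that $(F_1,F_2)$ and $(F_1,\Lambda)$ are the same ideal in $\mathbb{Q}[P_1,P_2]$.

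I would first compute $\Lambda$ explicitly. The dual Euler sequence $0\to\mathcal{O}_{\mathbb{P}^{n-1}}(-1)\to\underline{\mathbb{C}^n}\to\mathcal{U}_{n-1}^\vee\to 0$ together with multiplicativity of $\lambda_t$ gives $(1+tP_2)\lambda_t(E^\vee) = (1+t)^n$. Matching coefficients of $t^i$ yields the recursion $a_i + P_2 a_{i-1} = \binom{n}{i}$ for $a_i := [\wedge^i E^\vee]$, whence $a_i = \sum_{k=0}^i \binom{n}{i-k}(-P_2)^k$ and $\Lambda = \sum_{i=0}^{n-1}(-P_1)^i a_i$. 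Computing $P_2 a_i = -\sum_{s=1}^{i+1}\binom{n}{i+1-s}(-P_2)^s$ and applying the symmetry $\binom{n}{i+1-s}=\binom{n}{n-1-i+s}$, one finds
\[
P_2\Lambda = -\sum_{l=0}^{n-1}(-P_1)^l \sum_{s=1}^{l+1}\binom{n}{n-1-l+s}(-P_2)^s,
\]
which is precisely $(-1)^{n+1}$ times the right-hand side of \cref{clasK}. Hence $F_2 \equiv (-1)^{n+1} P_2 \Lambda \pmod{F_1}$.

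Finally, because $F_1 = (1-P_2)^m$ makes $1-P_2$ nilpotent, $P_2 = 1-(1-P_2)$ is a unit in $\mathbb{Q}[P_2]/F_1$ with inverse $\sum_{j=0}^{m-1}(1-P_2)^j$. This upgrades the congruence above to the ideal equality $(F_1,F_2) = (F_1,\Lambda)$ in $\mathbb{Q}[P_1,P_2]$, completing the identification of $K(H_{n-1,m-1})$ with the stated presentation. The main obstacle is the rearrangement identifying $P_2\Lambda$ with the combinatorial expression appearing in \cref{clasK}: several re-indexings and the symmetry $\binom{n}{k}=\binom{n}{n-k}$ must be tracked carefully, though no essentially new identity is required once \cref{clasK} is invoked.
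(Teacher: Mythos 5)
Your argument is correct and takes essentially the same route as the paper's proof: realize $H_{n-1,m-1}$ as the projective bundle $\mathbb{P}(\iota_m^*\mathcal{U}_{n-1})$ over $\mathbb{P}^{m-1}$, apply the projective-bundle formula in $K$-theory, compute the exterior powers of the tautological bundle from the Euler-type exact sequence, and then match the resulting relation against $F_2$ modulo $F_1$ via \cref{clasK}. The only differences are in packaging: you work with the dual bundle $E^\vee$ and the $\lambda_t$-multiplicativity $(1+tP_2)\lambda_t(E^\vee)=(1+t)^n$ to obtain the exterior powers in closed form, where the paper computes $[\wedge^k(\iota_m^*\mathcal{U}_{n-1})]$ directly by reverse induction from \eqref{qbundle}; and your form of the projective-bundle relation $\Lambda=\lambda_{-P_1}(E^\vee)$ differs from the paper's \eqref{classeqn} by the unit factor $(-1)^{n-1}P_2$ (equivalently, $\lambda_{-1}(\pi^*E^\vee\otimes\mathcal{O}(-1))$ versus $\lambda_{-1}(\pi^*E\otimes\mathcal{O}(1))$ up to a power of $P_1$), which is why you correctly supply the extra observation that $P_2$ is a unit modulo $F_1$—a step the paper avoids because its normalization of the relation already agrees with $F_2-aF_1$ on the nose.
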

\begin{proof}
Note $H_{n-1,m-1}\cong \mathbb{P}(\iota_{m}^{*}\mathcal{U}_{n-1})$.  By the classical $K$-theory for projective bundles (see e.g. \cite[Theorem 2.16]{Kar}),
we have 
$$K(H_{n-1,m-1})=K(\bbP^{m-1})[P_1]/(P_1^{n-1}-[\lambda^{1}(\iota_{m}^{*}\mathcal{U}_{n-1})]P_1^{n-2}+\cdots+(-1)^{n-1}[\lambda^{n-1}(\iota_{m}^{*}\mathcal{U}_{n-1})]).$$
 Here $\lambda^{k}(\iota_{m}^{*}\mathcal{U}_{n-1})$ is the $k$-th exterior power of $\iota_{m}^{*}\mathcal{U}_{n-1}$, and $K(H_{n-1,m-1})$ is a free $K(\mathbb{P}^{m-1})$-module via the ring homomorphism $(\pi_2^{m})^{*}: K(\mathbb{P}^{m-1})\longrightarrow K(H_{n-1,m-1})$.

 Let $\underline{\bbC^n}$ also denote the trivial bundle over  $\mathbb{F}\ell_{n-1;n}$ by abuse of notation, and denote $\mathcal{Q}:=\underline{\bbC^n}/\mathcal{U}_{n-1}$, which is a line bundle. So $(1-(\iota_{m}^{*}\mathcal{Q})^{\vee})^{m}=0$ in $K(\mathbb{P}^{m-1}).$ 
  Notice $P_2=[\iota^{*}(\underline{\mathbb{C}^{n}}/\mathcal{S}_{n-1})^{\vee})]=[(\pi_2^{m})^{*}(\iota_{m}^{*}\mathcal{Q})^{\vee}] \in K(H_{n-1,m-1})$. Hence,     $(1-P_2)^{m}=0$.
  
 Let $\mathbf{1}\in K(\bbP^{m-1})$ be the class of the trivial line bundle. For   $ 1\leq k \leq n$, it follows from the exact sequence $0\to \mathcal{U}_{n-1}\to \underline{\bbC^{n}}\to \mathcal{Q}\to 0$, together with $\mathcal{Q}$ being a line bundle,  that 
  \begin{align} \label{qbundle}
      \binom{n}{k} \mathbf{1}=\sum_{t+b=k}[\iota_{m}^{*}(\lambda^{t}(\mathcal{U}_{n-1}))]\cdot[\iota_{m}^{*}(\lambda^{b}\mathcal{Q})]=[\lambda^{k-1}(\iota_{m}^{*}\mathcal{U}_{n-1}) \otimes \iota_{m}^{*}\mathcal{Q})]+  [\lambda^{k}(\iota_{m}^{*}\mathcal{U}_{n-1})].
  \end{align}
  In particular, we have $[\lambda^{n-1}(\iota_{m}^{*}\mathcal{U}_{n-1}) \otimes \iota_{m}^{*}\mathcal{Q})]=\mathbf{1}$, implying $[\lambda^{n-1}(\iota_{m}^{*}\mathcal{U}_{n-1})]=[\iota_{m}^{*}\mathcal{Q})^\vee]$. By (reverse) induction on $k$, we have   $[\lambda^{k}(\iota_{m}^{*}\mathcal{U}_{n-1})]=\sum_{s=1}^{n-k}(-1)^{s-1}\binom{n}{k+s} [(\iota_{m}^{*}\mathcal{Q})^{\vee}]^{s}$ in $K(\mathbb{P}^{m-1})$ for $1\leq k\leq n-1$. Hence,  $K(H_{n-1, m-1})$  is   the quotient of $\bbQ[P_1, P_2]$ by the ideal generated by  $(1-P_2)^m$ and 
   \begin{align} \label{classeqn}
       \sum_{l=0}^{n-1}(-1)^{n-1-l} (P_1)^{l}\bigg(\sum_{s=1}^{l+1}(-1)^{s-1}\binom{n}{n-1-l+s} P_2^{s}\bigg).
   \end{align}
 Then we are done by  Lemma \ref{clasK}.
 \end{proof}

\subsection{Ring presentation of $QK(H_{n-1,m-1})$}
 Recall  $L_i=P_i^{-1}\in K(H_{n-1,m-1})$, and   $\{h_i=c_1(L_i)\}_{1\leq i\leq 2}$ form a nef basis of $H^{2}(H_{n-1,m-1}, \bbZ)$.
 We have the Mori cone $\overline{\rm NE}(H_{n-1, m-1})=\mathbb{Z}_{\geq 0}\beta_1+\mathbb{Z}_{\geq 0}\beta_2\subset H_2(H_{n-1, m-1}, \bbZ)$ with 
 with $\int_{\beta_i}h_{j}=\delta_{i,j}$.  
By \cite[Proposition 1]{Tai} and \cite[Theorem 10]{Lee1999}, the small $K$-theoretic $J$-function of   $\mathbb{P}^{n-1} \times \mathbb{P}^{m-1}$ is given  by
 \begin{align}
     J_{\mathbb{P}^{n-1} \times \mathbb{P}^{m-1}}(\hbar, \mathbf{Q})=  \sum_{d_1\geq 0,\, d_2\geq 0}^{\infty} \frac{Q_1^{d_1}Q_2^{d_2}}{    \prod_{l=1}^{d_1}(1-\mathcal{L}_1^{-1}\hbar^{l})^{n} \prod_{l=1}^{d_2}(1-\mathcal{L}_2^{-1}\hbar^{l})^{m}}.
 \end{align}
 Here $\mathcal{L}_1$ (resp. $\mathcal{L}_2$)  is  the class  of the line bundle obtained by the pullback of  $\mathcal{O}_{\mathbb{P}^{n}}(1)$ (resp. $\mathcal{O}_{\mathbb{P}^{m}}(1)$), and we still denote by $Q_{1},Q_2$    the corresponding Novikov variables for  $QK(\mathbb{P}^{n-1} \times \mathbb{P}^{m-1})$ by abuse of notation.  
As a degree-$(1, 1)$ hypersurface of  $\mathbb{P}^{n-1} \times \mathbb{P}^{m-1}$, 
by the quantum Lefschetz principle \cite[the last Theorem]{Giv15} (see also \cite[Theorem 3]{Giv20} for general cases), we obtain  
the small $K$-theoretic $J$-function of $H_{n-1,m-1}$, \footnote{The $J$-function of $H_{2,2}=F\ell_{1,2;3}$ was computed in \cite[Example 2.5]{GiLe} up to a scalar by  $(1-\hbar)$.}  
 \begin{align}
     J_{H_{n-1,m-1}}(\hbar, \mathbf{Q})=   \sum_{d_1\geq 0, d_2\geq 0}^{\infty} \frac{Q_1^{d_1}Q_2^{d_2} \prod_{l=1}^{d_1+d_2}(1-L_1^{-1}\cdot L_2^{-1}\hbar^{l}) }{    \prod_{l=1}^{d_1}(1-L_1^{-1}\hbar^{l})^{n} \prod_{l=1}^{d_2}(1-L_2^{-1}\hbar^{l})^{m}} .
 \end{align}

\begin{thm} \label{Dmrelaton}
Denote $\vartheta_k:=1-L_k^{-1} \hbar^{Q_k \partial_{Q_k}}, k=1,2$. For the $\hbar$-difference operators 
\begin{align*}
     D_1&:=\vartheta_2^m,\qquad 
     D_2:= (-1)^{n-1}(\vartheta_1)^{n-1}+\sum_{t=1}^{m-1}(-1)^{n-1-t}(1-\vartheta_1)^{t-1}\vartheta_1^{n-t-1}\vartheta_2^{t},
\end{align*}
we have 
\begin{align}
     \big(D_1-Q_2+ Q_2\hbar (1-\vartheta_1)(1-\vartheta_2)\big)J_{H_{n-1, m-1}}&=0,\\
     \big(D_2-  (-1)^{n-m}Q_2 (1-\vartheta_1)^{m-1} \vartheta_1^{n-m} -(-1)^{n-1}Q_1(1-\vartheta_2) \big) J_{H_{n-1, m-1}}&=0.
\end{align}
\end{thm}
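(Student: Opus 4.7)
The plan is to expand $J_{H_{n-1,m-1}} = \sum_{d_1, d_2 \geq 0} c_{d_1, d_2} Q_1^{d_1} Q_2^{d_2}$ and verify each operator equation coefficient-by-coefficient in $Q_1, Q_2$. Since $\vartheta_k$ acts on a monomial $c_{d_1, d_2} Q_1^{d_1} Q_2^{d_2}$ by multiplication by $1 - L_k^{-1}\hbar^{d_k}$, the task reduces to identities in the coefficient ring, which I will view as elements of $K(H_{n-1,m-1}) \otimes \mathbb{Q}(\hbar)\dbb{Q_1,Q_2}$. Two telescoping recursions follow at once from the explicit product formula for $c_{d_1, d_2}$: for $d_1 \geq 1$,
\[
(1 - L_1^{-1}\hbar^{d_1})^n\, c_{d_1, d_2} = (1 - L_1^{-1}L_2^{-1}\hbar^{d_1+d_2})\, c_{d_1-1, d_2},
\]
and symmetrically for $d_2 \geq 1$,
\[
(1 - L_2^{-1}\hbar^{d_2})^m\, c_{d_1, d_2} = (1 - L_1^{-1}L_2^{-1}\hbar^{d_1+d_2})\, c_{d_1, d_2-1}.
\]

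The first equation falls out quickly. For $d_2 \geq 1$, the coefficient of $Q_1^{d_1} Q_2^{d_2}$ in the LHS collapses to $(1 - L_2^{-1}\hbar^{d_2})^m c_{d_1, d_2} - (1 - L_1^{-1}L_2^{-1}\hbar^{d_1+d_2}) c_{d_1, d_2-1}$, which vanishes by the second recursion; for $d_2 = 0$ it reduces to $(1-L_2^{-1})^m c_{d_1, 0}$, which is zero via the classical relation $F_1 = (1-L_2^{-1})^m = 0$ in $K(H_{n-1,m-1})$.

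For the second equation, the central algebraic input is the polynomial identity
\[
F_2(x,y)(1-xy) = (-1)^{n-m} x^{m-1}(1-x)^{n-m}(1-y)^m + (-1)^{n-1} y (1-x)^n,
\]
which I will establish by writing $1 - xy = (1-x) + x(1-y)$, expanding $F_2(x,y)(1-x)$ and $F_2(x,y)\cdot x(1-y)$ separately, and verifying that all intermediate $(1-y)^t$ contributions cancel pairwise, leaving only the two boundary terms. Granting this, I multiply the coefficient of $Q_1^{d_1} Q_2^{d_2}$ in the LHS of the second equation by $(1 - L_1^{-1}L_2^{-1}\hbar^{d_1+d_2})$, substitute the identity with $x = L_1^{-1}\hbar^{d_1}, y = L_2^{-1}\hbar^{d_2}$, and use the two recursions to rewrite the $(1-x)^n c_{d_1,d_2}$ and $(1-y)^m c_{d_1,d_2}$ factors. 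For interior $(d_1, d_2)$ with $d_1, d_2 \geq 1$, the result cancels term-by-term. The boundary strata $(0,0)$, $(d_1 \geq 1, 0)$, and $(0, d_2 \geq 1)$ are handled respectively by $F_2(L_1^{-1}, L_2^{-1}) = 0$, $(1-L_2^{-1})^m = 0$, and $(1-L_1^{-1})^n = 0$, all of which hold in $K(H_{n-1,m-1})$. Finally, to pass from the $(1 - L_1^{-1}L_2^{-1}\hbar^{d_1+d_2})$-multiple back to the coefficient itself, I use that $1 - L_1^{-1}L_2^{-1}$ is nilpotent in the local ring $K(H_{n-1,m-1})$ (having vanishing rank), so that $1 - L_1^{-1}L_2^{-1}\hbar^k = (1-\hbar^k) + \hbar^k(1-L_1^{-1}L_2^{-1})$ is a unit in $K(H_{n-1,m-1}) \otimes \mathbb{Q}(\hbar)$ for $k \geq 1$.

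The main obstacle is verifying $(1 - L_1^{-1})^n = 0$ inside the presentation $\mathbb{Q}[L_1^{-1}, L_2^{-1}]/(F_1, F_2)$ of Proposition 5.2, since this identity is not one of the defining relations. I expect to obtain it either by an explicit algebraic reduction (iteratively rewriting $(1-L_1^{-1})^n$ through $F_2 = 0$ and collecting the resulting multiples of $(1-L_2^{-1})^m = F_1$), or by invoking the pullback $K(\mathbb{P}^{n-1} \times \mathbb{P}^{m-1}) \twoheadrightarrow K(H_{n-1,m-1})$ under which $(1-L_1^{-1})^n = 0$ is inherited from the $\mathbb{P}^{n-1}$-factor. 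Apart from this, the remaining work is direct but requires careful bookkeeping of signs and index shifts in the three boundary strata.
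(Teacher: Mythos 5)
Your proposal is correct and follows essentially the same path as the paper: proceed coefficient-by-coefficient in $Q_1^{d_1}Q_2^{d_2}$, invoke the classical relations $F_1=F_2=(1-L_1^{-1})^n=0$ in $K(H_{n-1,m-1})$ on the boundary strata, and apply what you state explicitly as the polynomial identity $F_2(x,y)(1-xy)=(-1)^{n-m}x^{m-1}(1-x)^{n-m}(1-y)^m+(-1)^{n-1}y(1-x)^n$, which is precisely the identity the paper uses implicitly after setting $S_i=1-L_i^{-1}\hbar^{d_i}$. The only stylistic difference is that the paper avoids your multiply-then-invert step (and hence the nilpotency/unit argument) by pulling the factor $1-L_1^{-1}L_2^{-1}\hbar^{d_1+d_2}$ directly out of the numerator of the $J$-function coefficient and then canceling $S_1^n$ or $S_2^m$ against the denominator to re-index the sums, so no division is ever required.
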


\begin{proof}
We show the statement by  direct computations as follows.  

\begin{align*}
   D_1 (J_{H_{n-1,m-1}}) &=  \bigg( \sum_{d_2=0} \frac{ 
  { (1-L_2^{-1})^{m}} Q_1^{d_1} \prod_{l=1}^{d_1}(1-L_1^{-1}\cdot L_2^{-1}\hbar^{l}) }{     \prod_{l=1}^{d_1}(1-L_1^{-1}\hbar^{l})^{n}}  \\
    &{}\qquad+  \sum_{ d_2 \geq 1}^{\infty} \frac{ 
  { (1-L_2^{-1}\hbar^{d_2})^{m}} Q_1^{d_1}Q_2^{d_2} \prod_{l=1}^{d_1+d_2}(1-L_1^{-1}\cdot L_2^{-1}\hbar^{l}) }{     \prod_{l=1}^{d_1}(1-L_1^{-1}\hbar^{l})^{n} \prod_{l=1}^{d_2}(1-L_2^{-1}\hbar^{l})^{m}}  \bigg) \\
   &=  0+ \sum_{d_2 \geq 1}^{\infty} \frac{ 
    Q_1^{d_1}Q_2^{d_2} \prod_{l=1}^{{d_1+d_2-1}}(1-L_1^{-1}\cdot L_2^{-1}\hbar^{l}) \cdot 1  }{     \prod_{l=1}^{d_1}(1-L_1^{-1}\hbar^{l})^{n} \prod_{l=1}^{{d_2-1}}(1-L_2^{-1}\hbar^{l})^{m}} \\
    &{}\qquad + Q_2 \hbar \sum_{d_2 \geq 1}^{\infty} \frac{ 
    Q_1^{d_1}Q_2^{d_2-1} \prod_{l=1}^{{d_1+d_2-1}}(1-L_1^{-1}\cdot L_2^{-1}\hbar^{l}) (-L_1^{-1} \hbar^{d_1} \cdot L_2^{-1}\hbar^{d_2-1}) }{     \prod_{l=1}^{d_1}(1-L_1^{-1}\hbar^{l})^{n} \prod_{l=1}^{{d_2-1}}(1-L_2^{-1}\hbar^{l})^{m}} \\
    &=Q_2 J_{H_{n-1,m-1}}-Q_2 \hbar (L_1^{-1} \hbar^{Q_1 \partial_{Q_1}})(L_2^{-1} \hbar^{Q_2 \partial_{Q_2}})(J_{H_{n-1,m-1}}).
\end{align*}
Here  the term $0$ in the second equality follows from the relation $(1-L_2^{-1})^{m}=0$ in $K(H_{n-1, m-1})$. The third  equality holds because  $d_2 \geq 1$ is replaced by $d_2-1 \geq 0$, so that the $J$-function  $J_{H_{n-1,m-1}}$  appears again.

To simplify the expressions, we denote $S_i:=(1-L_i^{-1} \hbar^{d_i}), i=1, 2$. We have

\begin{align*}
    &{}\quad D_2(J_{H_{n-1,m-1}}) \\
    &=   \bigg({F_2(L_1^{-1},L_2^{-1})}+  \sum_{0\neq (d_1,d_2)}^{\infty} Q_1^{d_1}Q_2^{d_2} \frac{ 
 {F_2(L_1^{-1} \hbar^{d_1},L_2^{-1} \hbar^{d_2})}  \prod_{l=1}^{d_1+d_2}(1-L_1^{-1}\cdot L_2^{-1}\hbar^{l}) }{     \prod_{l=1}^{d_1}(1-L_1^{-1}\hbar^{l})^{n} \prod_{l=1}^{d_2}(1-L_2^{-1}\hbar^{l})^{m}}  \bigg) \\
  &= 0+\sum_{0\neq (d_1,d_2)}^{\infty} Q_1^{d_1}Q_2^{d_2} \frac{ 
 {F_2(1-S_1,1-S_2)(-S_1S_2+S_1+S_2) } \prod_{l=1}^{{d_1+d_2-1}}(1-L_1^{-1}\cdot L_2^{-1}\hbar^{l}) }{     \prod_{l=1}^{d_1}(1-L_1^{-1}\hbar^{l})^{n} \prod_{l=1}^{d_2}(1-L_2^{-1}\hbar^{l})^{m}}   \\
  &=  \bigg(\sum_{d_1=0,d_2 \geq 1}^{\infty} Q_2^{d_2} \frac{ 
  ({(-1)^{n-m}(1-S_1)^{m-1}S_1^{n-m}S_2^{m}})  \prod_{l=1}^{{d_2-1}}(1-L_1^{-1}\cdot L_2^{-1}\hbar^{l}) }{  \prod_{l=1}^{d_2}(1-L_2^{-1}\hbar^{l})^{m}}   \\
  &{}\quad+\sum_{d_1 \geq 1,d_2=0}^{\infty} Q_1^{d_1} \frac{ 
  ({(-1)^{n-1}S_1^{n}(1-S_2)})  \prod_{l=1}^{{d_1-1}}(1-L_1^{-1}\cdot L_2^{-1}\hbar^{l}) }{     \prod_{l=1}^{d_1}(1-L_1^{-1}\hbar^{l})^{n} }  \\
  &{}\quad+\sum_{d_1\geq 1,d_2 \geq 1}^{\infty} Q_1^{d_1}Q_2^{d_2} \frac{ 
 {((-1)^{n-1}S_1^{n}(1-S_2)+(-1)^{n-m}(1-S_1)^{m-1}S_1^{n-m}S_2^{m})}\prod_{l=1}^{{d_1+d_2-1}}(1-L_1^{-1}\cdot L_2^{-1}\hbar^{l}) }{     \prod_{l=1}^{d_1}(1-L_1^{-1}\hbar^{l})^{n} \prod_{l=1}^{d_2}(1-L_2^{-1}\hbar^{l})^{m}}
  \bigg)  \\
  &=  \bigg(Q_2\sum_{d_2 \geq 1}^{\infty} Q_1^{d_1}Q_2^{{d_2-1}} \frac{ 
  ((-1)^{n-m}(1-S_1)^{m-1}S_1^{n-m}S_2^{m})  \prod_{l=1}^{d_1+{d_2-1}}(1-L_1^{-1}\cdot L_2^{-1}\hbar^{l}) }{     \prod_{l=1}^{d_1}(1-L_1^{-1}\hbar^{l})^{n} \prod_{l=1}^{{d_2-1}}(1-L_2^{-1}\hbar^{l})^{m} S_2^{m} }   \\
  &{}\qquad+Q_1\sum_{d_1 \geq 1}^{\infty} Q_1^{{d_1-1}}Q_2^{d_2} \frac{ 
  ((-1)^{n-1}S_1^{n}(1-S_2))  \prod_{l=1}^{{d_1-1}+d_2}(1-L_1^{-1}\cdot L_2^{-1}\hbar^{l}) }{   S_1^{n}  \prod_{l=1}^{{d_1-1}}(1-L_1^{-1}\hbar^{l})^{n} \prod_{l=1}^{d_2}(1-L_2^{-1}\hbar^{l})^{m}}  
  \bigg)    \\
&=  (-1)^{n-m} Q_2 (L_1^{-1} \hbar^{Q_1 \partial_{Q_1}})^{m-1} (1-L_1^{-1} \hbar^{Q_1 \partial_{Q_1}})^{n-m} (J_{H_{n-1,m-1}})+(-1)^{n-1}Q_1 (L_2^{-1} \hbar^{Q_2 \partial_{Q_2}})(J_{H_{n-1,m-1}}).
\end{align*}
Here  $F_2(x,y)$ is the polynomial defined in \cref{expF2}.  The  term $0$ in the second equality  follows from the relation $F_2(L_1^{-1},L_2^{-1})=0$ in $K(H_{n-1, m-1})$ by \cref{Classical K}. The term $(-S_1S_2+S_1+S_2)$ in the second equality  comes from    $1-L_1^{-1}L_2^{-1}\hbar^{d_1+d_2}. $
  The last  equality holds because  $d_2 \geq 1$ (resp. $d_1 \geq 1$) could  be replaced by $d_2-1 \geq 0$ (resp. $d_1-1 \geq 0$) after canceling out the same term $S_2^{m}$ (resp. $S_1^{n}$), so that the $J$-function  appears again.
\end{proof}

To achieve a ring presentation of $QK(H_{n-1,m-1})$, it remains to apply the following Nakayama-type result  proved in \cite[Proposition A.3]{GMSZ}. 

\begin{prop}[Gu-Mihalcea-Sharpe-Zou] \label{Nakatype}
    Let $A$ be a Noetherian integral domain, and let $\mathfrak{a} \subset A$ be an ideal. Assume that $A$ is complete in the $\mathfrak{a}$-adic topology. Let $M,N$ be finitely generated $A$-modules. Assume that the $A$-module $N$, and the $A/\mathfrak{a}$-module $N/\mathfrak{a} N$, are both free modules of the same rank $r \leq \infty$, and that we are given an $A$-module homomorphism $f:M \rightarrow N$ such that the induced $A/\mathfrak{a}$-module map $\overline{f}: M/\mathfrak{a}M \rightarrow N/\mathfrak{a}N$ is an isomorphism of $A/\mathfrak{a}$-modules. Then $f$ is an isomorphism.
\end{prop}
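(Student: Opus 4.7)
The plan is to prove surjectivity and injectivity of $f$ separately, in each case reducing to the $\mathfrak{a}$-adic Nakayama lemma. The essential use of the completeness of $A$ is the elementary observation that $\mathfrak{a} \subseteq \mathrm{Jac}(A)$: for any $a \in \mathfrak{a}$, the series $\sum_{k \geq 0} a^k$ converges in the $\mathfrak{a}$-adic topology and inverts $1-a$. Consequently, for any finitely generated $A$-module $P$ with $\mathfrak{a} P = P$, Nakayama's lemma gives $P = 0$.

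For surjectivity, I would consider the cokernel $C := N / \mathrm{Im}(f)$. Surjectivity of $\bar f$ means $\mathrm{Im}(f) + \mathfrak{a} N = N$, i.e.\ $C = \mathfrak{a} C$. Since $N$ is finitely generated and $C$ is a quotient of $N$, $C$ is finitely generated, so the observation above yields $C = 0$ and $f$ is surjective.

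For injectivity, surjectivity of $f$ together with the freeness (hence projectivity) of $N$ produces a splitting of the short exact sequence $0 \to K \to M \xrightarrow{f} N \to 0$, where $K := \ker f$. Thus $M \cong K \oplus N$ compatibly with $f$, and $\bar f$ is identified with the projection $K/\mathfrak{a} K \oplus N/\mathfrak{a} N \to N/\mathfrak{a} N$. The hypothesis that $\bar f$ is an isomorphism then forces $K/\mathfrak{a} K = 0$. Noetherianness of $A$ ensures $K$ is finitely generated (as a submodule of the finitely generated $M$), so a second application of Nakayama forces $K = 0$.

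I expect no serious obstacle; the argument is essentially standard commutative algebra adapted to the complete Noetherian setting. The only bookkeeping that requires care is checking, at each application of Nakayama, that the module in question ($C$ in the first step, $K$ in the second) is finitely generated, which is precisely where the Noetherian hypothesis on $A$ together with the finite generation of $M$ and $N$ is used.
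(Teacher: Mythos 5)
Your proof is correct, and since the paper cites this result from \cite{GMSZ} (their Proposition A.3) rather than reproducing an argument, there is no in-paper proof to compare against. A few small remarks worth recording: the key observation $\mathfrak{a} \subseteq \mathrm{Jac}(A)$ is exactly how completeness enters, and your two Nakayama applications (to the cokernel for surjectivity, to the kernel for injectivity after splitting off $N$ using its projectivity) are the standard route. One could replace the splitting argument by tensoring the short exact sequence $0 \to K \to M \to N \to 0$ with $A/\mathfrak{a}$ and using flatness of the free module $N$ to see $\mathrm{Tor}_1^A(N,A/\mathfrak{a})=0$, which gives $K/\mathfrak{a}K \hookrightarrow M/\mathfrak{a}M$ directly; this is equivalent to what you did. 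Note also that your argument never invokes the integral-domain hypothesis or the ``same rank'' clause on $N$ and $N/\mathfrak{a}N$ (the latter is automatic once $N$ is free over $A$); these appear to be either convenient for the intended application or slightly redundant in the source, not load-bearing in the proof.
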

Now we restate \cref{thm: presentationQK} as follows.
 \begin{thm}  \label{quantum K ring presentation} Let $n\geq m\geq 3$.
     The small quantum $K$-theory  of    $H_{n-1, m-1}$  is presented by
    \begin{align*}
     QK(H_{n-1, m-1}) \cong \mathbb{Q}[L_1^{-1},L_2^{-1}]\dbb{Q_1,Q_2}/(F_1^{Q}, F_2^{Q}),
\end{align*}
 where  $F_1^{Q}(L_1^{-1},L_2^{-1},Q_1,Q_2)=F_1(L_1^{-1},L_2^{-1})-Q_2+Q_2 L_1^{-1}L_2^{-1}$ and
 $$F_2^{Q}(L_1^{-1},L_2^{-1},Q_1,Q_2) =F_2(L_1^{-1},L_2^{-1})-(-1)^{n-m}Q_2 (L_1^{-1})^{m-1}(1-L_1^{-1})^{n-m}-(-1)^{n-1}Q_1 L_2^{-1}.
 $$  
\end{thm}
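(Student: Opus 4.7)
The plan follows the three-step strategy outlined in the introduction (compute $J$-function, extract $\hbar$-difference relations, apply a Nakayama-type lifting), and most of the heavy lifting is already in place: the $K$-theoretic $J$-function $J_{H_{n-1,m-1}}$ has been written down from the quantum Lefschetz principle, and \cref{Dmrelaton} has produced two explicit $\hbar$-difference operators annihilating it whose ``classical shadows'' ($\hbar\to 1$, $L_i^{-1}\hbar^{Q_i\partial_{Q_i}}\mapsto L_i^{-1}$) are exactly $F_1^{Q}$ and $F_2^{Q}$. Thus the first task is to invoke \cref{quantum-K} to promote these difference relations to genuine relations in $QK(H_{n-1,m-1})$.

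The key technical point is to verify hypothesis (ii) of \cref{quantum-K}, namely the vanishing at $\hbar=\infty$ of the coefficient of $Q_1^{d_1}Q_2^{d_2}$ in $L_i^{-1}\hbar^{Q_i\partial_{Q_i}}(J_{H_{n-1,m-1}})$ for $(d_1,d_2)\neq(0,0)$. Writing the coefficient of $Q_1^{d_1}Q_2^{d_2}$ in $J_{H_{n-1,m-1}}$ as the rational function
\[
\frac{\prod_{l=1}^{d_1+d_2}(1-L_1^{-1}L_2^{-1}\hbar^l)}{\prod_{l=1}^{d_1}(1-L_1^{-1}\hbar^l)^n\prod_{l=1}^{d_2}(1-L_2^{-1}\hbar^l)^m},
\]
an elementary degree count in $\hbar$ gives top order
\[
\tfrac{1}{2}(d_1+d_2)(d_1+d_2+1)-\tfrac{n}{2}d_1(d_1+1)-\tfrac{m}{2}d_2(d_2+1),
\]
and the $\hbar^{Q_i\partial_{Q_i}}$-shift adds $d_i$. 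I will check that under $n\geq m\geq 3$, this quantity is strictly negative for every $(d_1,d_2)\neq(0,0)$ (the boundary cases $(1,0)$ and $(0,1)$ exactly explain the footnote excluding $m=2$). Once this is done, \cref{quantum-K} applied to the operators in \cref{Dmrelaton} yields $F_1^{Q}(L_1^{-1},L_2^{-1},Q_1,Q_2)=0$ and $F_2^{Q}(L_1^{-1},L_2^{-1},Q_1,Q_2)=0$ in $QK(H_{n-1,m-1})$, giving a well-defined ring homomorphism
\[
\phi\colon R:=\mathbb{Q}[L_1^{-1},L_2^{-1}]\dbb{Q_1,Q_2}/(F_1^{Q},F_2^{Q})\longrightarrow QK(H_{n-1,m-1}),\quad L_i^{-1}\mapsto L_i^{-1},\ Q_i\mapsto Q_i.
\]

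The second main step is to promote $\phi$ to an isomorphism via \cref{Nakatype}. Take $A=\mathbb{Q}\dbb{Q_1,Q_2}$ with $\mathfrak{a}=(Q_1,Q_2)$; then $A$ is a Noetherian complete local integral domain and $N:=QK(H_{n-1,m-1})=K(H_{n-1,m-1})\otimes_{\mathbb{Q}} A$ is free over $A$ of rank $r:=\dim_{\mathbb{Q}}K(H_{n-1,m-1})$, with $N/\mathfrak{a}N=K(H_{n-1,m-1})$ free of the same rank over $A/\mathfrak{a}=\mathbb{Q}$. Setting $M:=R$, reduction modulo $(Q_1,Q_2)$ sends $F_i^{Q}$ to $F_i$, hence by \cref{Classical K} the induced map $\bar\phi\colon M/\mathfrak{a}M\to N/\mathfrak{a}N$ is precisely the classical isomorphism $\mathbb{Q}[L_1^{-1},L_2^{-1}]/(F_1,F_2)\cong K(H_{n-1,m-1})$. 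Finite generation of $M$ over $A$ follows from the fact that $M/\mathfrak{a}M$ has finite $\mathbb{Q}$-dimension together with the completeness of $A$ in the $\mathfrak{a}$-adic topology (topological Nakayama). All hypotheses of \cref{Nakatype} are then met, and $\phi$ is an isomorphism.

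The step I expect to require the most care is the degree analysis establishing hypothesis (ii) of \cref{quantum-K}: it must be carried out uniformly in $(d_1,d_2)$, and the constraint $n\geq m\geq 3$ enters exactly in the borderline small cases, so one has to be honest about those rather than waving at asymptotic behaviour. The classical-limit identification (third bullet of the Nakayama step) is also a consistency check rather than a triviality, since one must confirm that the precise polynomial $F_2^{Q}$ of \cref{Dmrelaton} really does degenerate to the presentation polynomial produced in \cref{clasK}; this is where the identity in \cref{clasK} between $F_2(x,y)$ and the expansion $(-1)^n\sum_l(-x)^l\sum_s\binom{n}{n-1-l+s}(-y)^s$ modulo $F_1$ is used to match the two descriptions of $K(H_{n-1,m-1})$. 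Everything else is bookkeeping.
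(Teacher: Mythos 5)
Your proposal matches the paper's proof in all essentials: write down $J_{H_{n-1,m-1}}$ from quantum Lefschetz, use \cref{Dmrelaton} together with \cref{quantum-K} to obtain the relations $F_1^Q = F_2^Q = 0$ in $QK(H_{n-1,m-1})$, then apply the Nakayama-type \cref{Nakatype} with $A=\mathbb{Q}\dbb{Q_1,Q_2}$, $\mathfrak{a}=(Q_1,Q_2)$, using \cref{Classical K} to identify $M/\mathfrak{a}M\to N/\mathfrak{a}N$ with the classical isomorphism. The one place where you go beyond the paper's wording is the explicit $\hbar$-degree count
\[
\tfrac{1}{2}(d_1+d_2)(d_1+d_2+1)-\tfrac{n}{2}d_1(d_1+1)-\tfrac{m}{2}d_2(d_2+1)+d_i<0\quad\text{for }(d_1,d_2)\neq(0,0),
\]
which correctly fleshes out the paper's assertion that vanishing at $\hbar=\infty$ ``follows directly from the expression of $J_{H_{n-1,m-1}}$'' and correctly locates the constraint $m\geq 3$ at the boundary cases $(d_1,d_2)=(1,0)$ or $(0,1)$. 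One small overstatement: the classical-limit identification $F_i^Q\equiv F_i\pmod{(Q_1,Q_2)}$ is immediate from the formulas in the theorem statement, so it is not really a consistency check requiring \cref{clasK}; \cref{clasK} is used only inside \cref{Classical K} to establish the classical presentation itself.
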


\begin{proof}
   Let $M$ be the right hand side of the expected isomorphism,  set  $N:=QK(H_{n-1,m-1})$, and  take the ideal  $\mathfrak{a}=(Q_1,Q_2)$  of 
   $A:=\mathbb{Q}\dbb{Q_1,Q_2}$. Clearly, we have $A/\mathfrak{a}=\mathbb{Q}$. Moreover, $N$ (resp. $N/\mathfrak{a}N$) is a free $A$-module (resp. $A/\frak{a}$-module) of rank $\dim_{\mathbb{Q}}K(H_{n-1,m-1})=m(n-1)<\infty$. Note $n\geq m\geq 3$. It follows directly from the expression of $J_{H_{n-1, m-1}}$ that  each $(d_1, d_2)$-term of  $L_i^{-1}\hbar^{Q_i\partial_{Q_i}}(J_{H_{n-1, m-1}})$   vanishes at $\hbar=\infty$ for $1\leq i \leq 2$, whenever $d_1>0$ or $d_2>0$.
   By     \cref{quantum-K}  and \cref{Dmrelaton},
    the relations $F_1^{Q}(L_1^{-1},L_2^{-1},Q_1,Q_2)=0$ and $F_2^{Q}(L_1^{-1},L_2^{-1},Q_1,Q_2)=0$ hold in $QK(H_{n-1,m-1})$ (with respect to the quantum $K$-product). 
    Therefore we have a canonical ring homomorphism $f: M \rightarrow N$ defined  by $L_i^{-1} \mapsto L_i^{-1}$ and $Q_i \mapsto Q_i, i=1,2$. Since 
    $f(\mathfrak{a}M) \subset \mathfrak{a}N$, it induces an $A/\mathfrak{a}$-module homomorphism $\overline{f}:  M/\mathfrak{a}M \rightarrow N/\mathfrak{a}N$. Note    $M/\mathfrak{a}M \cong (A/\mathfrak{a})\otimes_{A} M \cong \mathbb{Q}[L_1^{-1},L_2^{-1}]/(F_1(L_1^{-1},L_2^{-1}),F_2(L_1^{-1},L_2^{-1})$ and $N/\mathfrak{a}N \cong K(H_{n-1,m-1})$. Thus 
     $\overline{f}$ is an isomorphism of $A/\mathfrak{a}$-modules  by \cref{Classical K}.  Therefore $f$ is an isomorphism of $A$-modules by \cref{Nakatype}.
\end{proof}

\begin{remark}
    The presentation for the special case   $QK(H_{n-1, n-1})$ can also be derived from  the  quantum Littlewood-Richardson rule for  $QK(\Fln)$ \cite[Section 5]{Xu}. Therein, the  Schubert class $\mathcal{O}^{[1,n-1]}$ (resp. $\mathcal{O}^{[2,n]}$)  is given by  $1-L_1^{-1}$, (resp. $1-L_2^{-1}$).    
\end{remark}

\section{Appendix: proof of \cref{lemma--h1+h2nonzerodivisorQHFl}}

By \cite{BCFKS}, the toric superpotential mirror to  $\Fln$ is the   Laurent polynomial
\[f_{\rm tor}:=\sum_{k=1}^{2n-3} \frac{x_{k}}{x_{k-1}}+\frac{q_2}{x_{n-2}}+\frac{x_{n}}{q_2}+\frac{q_1 q_2}{x_{2n-3}}\in\mathbb{Q}[x_{1}^{\pm 1},\cdots, x_{2n-3}^{\pm 1},q_1^{\pm 1},q_2^{\pm 1} ],
\]
where $x_0:=1$. The Jacobi ring of $f_{\rm tor}$ is defined by 
\begin{align} \label{Jideal}
    {\rm Jac}(f_{\rm tor}):=\mathbb{Q}[x_{1}^{\pm 1},\cdots, x_{2n-3}^{\pm 1},q_1^{\pm 1},q_2^{\pm 1} ]\Big/\Big(x_1 \frac{\partial f_{\rm tor}}{\partial x_1},\cdots,x_{2n-3}\frac{\partial f_{\rm tor}}{\partial x_{2n-3}}\Big).
\end{align}

\begin{prop} \label{MS}
There is an injective homomorphism of $\mathbb{Q}\dbb{q_1,q_2}$-algebras
   \[
   \Phi: QH(\Fln)\hookrightarrow {\rm Jac}(f_{\rm tor})\otimes_{\mathbb{Q}[q_1,q_2]}\mathbb{Q}\dbb{q_1,q_2},
   \]
   such that  
   \(
   \Phi(h_1)=[\frac{q_1q_2}{x_{2n-3}}],\quad\Phi(h_2)=[x_1].
   \)
 
\end{prop}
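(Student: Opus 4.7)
The plan is in two steps: first, verify that $\Phi$ is a well-defined ring homomorphism by showing the defining relations $f_1^q$ and $f_2^q$ of \cref{prop: QHFl} are killed in $\mathrm{Jac}(f_{\rm tor})$ under the prescribed substitutions; second, deduce injectivity by extending scalars to the fraction field and comparing dimensions.

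For well-definedness, I would compute the generators $R_k := x_k\,\partial_{x_k} f_{\rm tor}$ of the Jacobi ideal (with the convention $x_0 := 1$). These split into a generic family $x_k^2 = x_{k-1}x_{k+1}$ (for $k \notin \{n-2, n, 2n-3\}$) and three exceptional relations coming from the monomials $q_2/x_{n-2}$, $x_n/q_2$, $q_1q_2/x_{2n-3}$, namely
\[
x_{n-2}^2 = x_{n-3}(x_{n-1}+q_2),\quad x_n^2(q_2+x_{n-1}) = q_2\,x_{n-1}x_{n+1},\quad x_{2n-3}^2 = q_1 q_2\, x_{2n-4}.
\]
Starting from $x_1 = h_2$, the generic recursion gives $x_k = h_2^k$ for $1 \leq k \leq n-2$; the relation at $k = n-2$ forces $x_{n-1} = h_2^{n-1} - q_2$; and iterating through $k = n, \ldots, 2n-4$ yields a closed form $x_{2n-3} = (h_2^{n-1}-q_2)^{n-1}/(q_2^{n-3}h_2)$, with the final relation at $k = 2n-3$ collapsing to the single identity $(h_2^{n-1}-q_2)^n = q_1 q_2^{n-1}$. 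Combining this with $h_1 = q_1 q_2/x_{2n-3}$ directly yields $h_2(h_2^{n-1}-q_2) = q_2 h_1$, i.e.\ $f_1^q = 0$. For $f_2^q$, I would set $a := h_1/h_2$, deduce $a^n = q_1/q_2$, and apply the elementary closed form $\sum_{l=0}^{n-1}(-1)^{n-1-l}a^l = (a^n + (-1)^{n-1})/(a+1)$ to reduce $f_2^q$ to a scalar multiple of $h_2(h_2^{n-1}-q_2) - q_2 h_1$, which vanishes by $f_1^q = 0$.

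For injectivity, set $K := \mathrm{Frac}(\mathbb{Q}\dbb{q_1,q_2})$. The source $QH(\Fln)$ is a free $\mathbb{Q}\dbb{q_1,q_2}$-module of rank $n(n-1) = \dim_{\mathbb{Q}} H^{\rm ev}(\Fln)$, so $QH(\Fln) \otimes K$ is a $K$-vector space of dimension $n(n-1)$. The image of $\Phi$ is generated over $\mathbb{Q}\dbb{q_1,q_2}$ by $h_2$ and $\Phi(h_1)$ subject to the degree-$n(n-1)$ relation $(h_2^{n-1}-q_2)^n = q_1 q_2^{n-1}$ in $h_2$, so its $K$-dimension is at most $n(n-1)$. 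Hence $\Phi \otimes K$ is a surjection between $K$-vector spaces of equal finite dimension, and therefore an isomorphism. Consequently $\ker \Phi$ is a torsion submodule of the free module $QH(\Fln)$, and so vanishes.

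The main obstacle is the algebraic manipulation leading to $f_2^q = 0$: while $f_1^q = 0$ falls out immediately from comparing two expressions for $x_{2n-3}$, obtaining $f_2^q = 0$ requires recognizing the signed geometric-sum collapse and exploiting $a^n = q_1/q_2$ simultaneously with the already-established $f_1^q = 0$. A secondary nuisance is handling the small-$n$ boundary cases where some of $n-2$, $n$, $2n-3$ coincide (notably $n = 3$, where $n-2 = 1$ and $n = 2n-3 = 3$, so the exceptional relation at the terminal index combines the contributions from both $x_n/q_2$ and $q_1q_2/x_{2n-3}$); these cases I would verify by direct computation.
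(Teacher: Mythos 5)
Your verification of well-definedness is correct, and it takes a noticeably different computational route from the paper's. The paper eliminates the middle variables from both ends (using $R_1,\dots,R_{n-3}$ to write $x_k=x_1^k$ and $R_{n+1},\dots,R_{2n-3}$ to write $x_k$ in terms of $x_{2n-3}$), then works with the three leftover relations $R_{n-2},R_{n-1},R_n$ in the Laurent ring $\mathbb{Q}[x_1^{\pm},x_{2n-3}^{\pm},x_{n-1}^{\pm},q_1^{\pm},q_2^{\pm}]$ and grinds out $f_2^q$ by a lengthy iterated substitution. You instead run the recursion in one direction, obtain the closed form $x_{2n-3}=(h_2^{n-1}-q_2)^{n-1}/(q_2^{n-3}h_2)$, collapse everything to the single relation $(h_2^{n-1}-q_2)^n=q_1q_2^{n-1}$, and then get $f_2^q=0$ cleanly from the signed geometric-sum identity combined with $a^n=q_1/q_2$. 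This is a genuinely simpler way to verify the two relations; I checked your closed form, the consequence $q_2h_1=h_2(h_2^{n-1}-q_2)$, and the $f_2^q$ telescope, and they all go through.

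The injectivity argument, however, has a real gap. You show that after tensoring with $K=\mathrm{Frac}(\mathbb{Q}\dbb{q_1,q_2})$ the image of $\Phi\otimes K$ is a quotient of $K[h_2]$ satisfying a degree-$n(n-1)$ relation, hence has $K$-dimension \emph{at most} $n(n-1)$. Since the source $QH(\Fln)\otimes K$ also has dimension $n(n-1)$, you then assert that $\Phi\otimes K$ is a surjection between spaces of equal dimension. But ``$\dim(\mathrm{image})\le\dim(\mathrm{source})$'' is automatic for any linear map and tells you nothing; what you need is $\dim(\mathrm{image})=n(n-1)$, i.e.\ that the minimal polynomial of $x_1$ in $\mathrm{Jac}(f_{\rm tor})\otimes K$ really has degree $n(n-1)$ and is not a proper divisor of $(h_2^{n-1}-q_2)^n-q_1q_2^{n-1}$. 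That lower bound is precisely the content of injectivity and is not established by the estimate you gave. The paper avoids this by proving an \emph{equality of ideals}, $(R_{n-2},R_{n-1},R_n)=(x_{n-1}-(x_1^{n-1}-q_2),\,f_1^q,\,f_2^q)$, in the full Laurent polynomial ring, thereby identifying $\mathrm{Jac}(f_{\rm tor})\otimes\mathbb{Q}\dbb{q_1,q_2}$ with an explicit localization of $QH(\Fln)$; injectivity then reduces to a localization-map statement rather than a one-sided dimension bound. If you want to keep your route, you would need an independent lower bound on $\dim_K\mathrm{Jac}(f_{\rm tor})\otimes K$ (e.g.\ by a critical-point/mixed-volume count, or by exhibiting the ideal equality as the paper does), and that is the missing step.
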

\begin{proof}
Assume   $n \geq 4$ first.  
Let $R_a$ be the relation given by $x_a\frac{\partial f_{\rm tor}}{\partial x_a}$. From $R_1,...,R_{n-3}$ and $R_{n+1}, \ldots, R_{2n-3}$, we get 
 \begin{align}\label{R1}
    x_k=\begin{cases}
     x_1^k,& \mbox{if }2\le k\le n-2,\\
     (\frac{x_{2n-3}}{q_1 q_2})^{2n-3-k}x_{2n-3},&
  \mbox{if } n\leq k \leq 2n-4.
   \end{cases}
 \end{align} 
\noindent It follows immediately that 
\begin{align}\label{Jac1}
    {\rm Jac}(f_{\rm tor})\cong \mathbb{Q}[x_{1}^{\pm 1},  x_{2n-3}^{\pm 1}, x_{n-1}^{\pm 1}, q_1^{\pm 1},q_2^{\pm 1} ]\big/\big(R_{n-2}, R_{n-1}, R_{n}\big).
\end{align}
It follows from $R_{n-2}$ that 
\begin{align}\label{Rn-1}
    x_{n-1}=x_1^{n-1}{-}q_2.
\end{align}
It follows from $R_{n-1}$ and \eqref{R1}  that in ${\rm Jac}(f_{\rm tor})$, 
\begin{align}\label{Rn}
     x_n=\frac{(x_{1}^{n-1}-q_2)^{2}}{x_{1}^{n-2}}=  (\frac{x_{2n-3}}{q_1 q_2})^{n-3}x_{2n-3}.
\end{align}
 By $R_n$ we have $\frac{x_n}{x_{n-1}}=\frac{x_{n+1}}{x_n}-\frac{x_n}{q_2}$. By
 \eqref{Rn-1} and \eqref{Rn}, we have $\frac{x_n}{x_{n-1}}=\frac{x_1^{n-1}-q_2}{x_1^{n-2}}$. By \eqref{R1}, we have  $\frac{x_{n+1}}{x_n}=\frac{q_1q_2}{x_{2n-3}}$. By \eqref{Rn}, we have  $\frac{x_n}{q_2}=\frac{1}{q_2} \frac{(x_{1}^{n-1}-q_2)^{2}}{x_{1}^{n-2}}$. All these relations  together  give the relation in ${\rm Jac}(f_{\rm tor})$ 
\begin{align}\label{Rn+1}
  \frac{x_{1}^{n }}{q_2}-x_1-\frac{q_1 q_2}{x_{2n-3}}=0. 
\end{align}
One can check that   the ideal  $(R_{n-2}, R_{n-1}, R_n)$ in \eqref{Jac1} is generated by the three relations \eqref{Rn-1}, \eqref{Rn}, \eqref{Rn+1}. 
Recall $QH(\Fln)=\bbQ[h_1, h_2]\dbb{q_1, q_2}/(f_1^q(h_1, h_2, q_1, q_2), f_2^q(h_1, h_2, q_1, q_2))$ by \cref{prop: QHFl}.
It remains to show the relations \eqref{Rn} and \eqref{Rn+1} are equivalent 
to   $f_1^q({q_1q_2\over x_{2n-3}}, x_1, q_1, q_2)$ and $f_2^q({q_1q_2\over x_{2n-3}}, x_1, q_1, q_2)$ in \eqref{Jac1}.

 Since $q_2$ is a unit in \eqref{Jac1}, \eqref{Rn+1} is equivalent to $x_1^n-q_2(x_1+{q_1q_2\over x_{2n-3}})= f_1^q({q_1q_2\over x_{2n-3}}, x_1, q_1, q_2)$.
By direct calculations using \eqref{Rn} and \eqref{Rn+1}, we have
\begin{align}\label{Rn+1new}
    q_1q_2-(\frac{q_1 q_2}{x_{2n-3}})^{n-2}(-\frac{x_{1}^{n-1}-q_2}{x_{1}^{n-2}}+\frac{q_1 q_2}{x_{2n-3}}) q_2=0
\end{align}
 Therefore, in \eqref{Jac1}, we note $q_2$ is a unit again, and then  by \eqref{Rn+1new} we have
\begin{align*}
   0= &q_1-(\frac{q_1 q_2}{x_{2n-3}})^{n-2}(-x_1+\frac{q_2}{x_{1}^{n-2}}+\frac{q_1 q_2}{x_{2n-3}}) \\
    ={}&q_1-\big((\frac{q_1 q_2}{x_{2n-3}})^{n-1}- (\frac{q_1 q_2}{x_{2n-3}})^{n-2} x_1+\frac{q_1 q_2}{x_{2n-3}}\frac{q_2}{x_{1}^{n-2}}(\frac{q_1 q_2}{x_{2n-3}})^{n-3}  \big) \\
    ={}&q_1-\big((\frac{q_1 q_2}{x_{2n-3}})^{n-1}- (\frac{q_1 q_2}{x_{2n-3}})^{n-2} x_1+(\frac{q_1 q_2}{x_{2n-3}})^{n-3} x_1^{2}-\frac{q_2}{x_1^{n-3}}(\frac{q_1 q_2}{x_{2n-3}})^{n-3 } \big) \\
    ={}&q_1-\big((\frac{q_1 q_2}{x_{2n-3}})^{n-1}- (\frac{q_1 q_2}{x_{2n-3}})^{n-2} x_1+(\frac{q_1 q_2}{x_{2n-3}})^{n-3} x_1^{2}\\
    &\qquad\qquad\qquad\qquad-(\frac{q_1 q_2}{x_{2n-3}})^{n-4} x_1^{3}+\cdots+(-1)^{n-1}x_1^{n-1}-(-1)^{n-1} q_2\big). 
\end{align*}
 Here the third equality follows from \eqref{Rn+1}, and the last equality follows from \eqref{Rn+1} together with the induction on the power of $x_1$. Therefore  we obtain the following relation
\begin{align}
    0=\sum_{k=0}^{n-1}(-1)^{k}(\frac{q_1 q_2}{x_{2n-3}})^{n-1-k} x_1^{k}-(-1)^{n-1}q_2-q_1=f_2^q({q_1q_2\over x_{2n-3}}, x_1, q_1, q_2).
\end{align}

It follows directly from the process of obtaining $f_1^q, f_2^q$ that  
$$\big(R_{n-2}, R_{n-1}, R_{n}\big)=\big(x_{n-1}-(x_1^{n-1}-q_2), f_1^q({q_1q_2\over x_{2n-3}}, x_1, q_1, q_2), f_2^q({q_1q_2\over x_{2n-3}}, x_1, q_1, q_2)\big)$$
   as ideals in  $\mathbb{Q}[x_{1}^{\pm 1},  x_{2n-3}^{\pm 1}, x_{n-1}^{\pm 1}, q_1^{\pm 1},q_2^{\pm 1} ]$.
Hence,   $h_1\mapsto {q_1q_2\over x_{2n-3}}, h_2\mapsto x_1$, $q_1\mapsto q_1$ and $q_2\mapsto q_2$ cannonically defines an injective ring homomorphism $\Phi$.

The case $n=3$ can be verified by similar but easier calculations (in which case $\mathbb{F}\ell_{1, 2; 3}$ is a complete flag variety and the expected statement is known earlier to the experts). 
\end{proof}

\begin{remark}
    The injective homomorphism $\Phi$ can naturally be extended to an isomorphism  from a suitable localization of $QH(\Fln)$ to ${\rm Jac}(f_{\rm tor})\otimes_{\mathbb{Q}[q_1,q_2]}\mathbb{Q}\dbb{q_1,q_2}$.
\end{remark}

\bigskip

\begin{proof}[Proof of \cref{lemma--h1+h2nonzerodivisorQHFl}]
By \cref{MS},  $QH(\Fln)$ is a subring of ${\rm Jac}(f_{\rm tor})\otimes_{\mathbb{Q}[q_1,q_2]}\mathbb{Q}\dbb{q_1,q_2}$ via the embeding $\Phi$.  Note that both $\Phi(h_1)={q_1q_2\over x_{2n-3}}$ and $q_1$ are invertible in the Jacobi ring.
Since $h_1^{n}=q_1(h_1+h_2)$ holds in $QH(\Fln)$, $\Phi(h_1+h_2)=q_1^{-1}\Phi(h_1)^n$ is invertible in the Jacobi ring. Hence, 
$h_1+h_2$ is not a zero-divisor in (the subring) $QH(\Fln)$. 
\end{proof}

\end{document}